\theoremstyle{thmstyleone}%
\newtheorem{theorem}{Theorem}
\newtheorem{lemma}{Lemma}
\theoremstyle{thmstyletwo}%
\theoremstyle{thmstylethree}%
\begin{document}

\title[High-order Contour Integral Methods for Semigroups]{A family of high-order accurate contour integral methods for strongly continuous semigroups}

\author*[1]{\fnm{Andrew} \sur{Horning}}\email{hornia3@rpi.edu}

\author[2]{\fnm{Adam R.} \sur{Gerlach}}\email{adam.gerlach.1@us.af.mil}

\affil*[1]{\orgdiv{Department of Mathematical Sciences}, \orgname{Rensselaer Polytechnic Institute}, \orgaddress{\street{1121 Sage Ave}, \city{Troy}, \postcode{12180}, \state{NY}, \country{USA}}}

\affil[2]{\orgdiv{Control Sciences Center}, \orgname{Air Force Research Laboratory}, \orgaddress{\street{2210 Eighth Street}, \city{WPAFB}, \postcode{45433}, \state{OH}, \country{USA}}}

\abstract{Exponential integrators based on contour integral representations lead to powerful numerical solvers for a variety of ODEs, PDEs, and other time-evolution equations. They are embarrassingly parallelizable and lead to global-in-time approximations that can be efficiently evaluated anywhere within a finite time horizon. However, there are core theoretical challenges that restrict their use-cases to analytic semigroups, e.g., parabolic equations. In this article, we use carefully regularized contour integral representations to construct a family of new high-order quadrature schemes for the larger, less regular, class of strongly continuous semigroups. Our algorithms are accompanied by explicit high-order error bounds and near-optimal parameter selection. We demonstrate key features of the schemes on singular first-order PDEs from Koopman operator theory.}

\keywords{operator exponential, semigroup, functional calculus, contour integral, Laplace Transform, Bromwich integral}


\pacs[MSC Classification]{65J08,65M15,47D06,46N40,	65D32,65B99}

\maketitle

\section{Introduction}\label{sec:intro}

This paper is about computing the action of an operator exponential, i.e.,
\begin{equation}\label{eqn:op_exp}
    K(t)x \, = \, \exp(At)x, \qquad\text{for all}\qquad 0\leq t\leq T,
\end{equation}
where $x$ is a vector in a separable Banach space $\mathcal{X}$ and $A$ is a linear operator that generates a strongly continuous semigroup on $\mathcal{X}$ (see~\cref{sec:sc_semigroups}). Given $A$, we aim to approximate the action of $K(t)$ over the real interval $[0,T]$. As an infinite-dimensional analogue of the matrix exponential, $K(t)$ is central to the theory, analysis, and approximation of autonomous time-dependent processes on the Banach space $\mathcal{X}$:
\begin{equation}\label{eqn:diff_eq}
    \frac{dx}{dt} = Ax, \qquad\text{with initial condition}\qquad x|_{t=0}=x_0\in\mathcal{X}
\end{equation}
The solution of~\cref{eqn:diff_eq} is provided, formally, by $x(t) = K(t)x_0$, and the theory of strongly continuous semigroups provides rigorous constructions of $K(t)$, characterizations of well-posedness, and notions of regularity in the abstract setting~\cite{engel2000one,hille1996functional,davies1980}. In a computational context, approximating $K(t)$ is a core task of exponential integrators, a class of robust algorithms used for challenging stiff evolution equations~\cite{hochbruck2010exponential,schmelzer2007,minchev2005review}. 

Among the many techniques for computing exponentials of matrices and operators~\cite{dubious_matrix_exp,Saad1992,Lubich1997,BEYLKIN1998362,COX2002430,mclachlan2002splitting,Ostermann2005,schmelzer2007,Higham2011,dorsek2012semigroup,rousset2021general}, methods based on contour integrals possess remarkable properties. To illustrate, suppose that $A$ is a matrix and $\gamma$ is a simple Jordan curve winding once counterclockwise around the spectrum of $A$. If $w_1,\ldots,w_N$ and $z_1,\ldots,z_N$ are suitable complex quadrature weights and nodes, then one approximates~\cite[Sec.~16]{trefethen2014exponentially}
\begin{equation}\label{eqn:cont_idea}
    \exp(At)x = \frac{1}{2\pi i}\int_\gamma e^{zt}(z-A)^{-1}x\, dz
    \approx \frac{1}{2\pi i}\sum_{k=1}^N w_k e^{z_k t} (z_k-A)^{-1}x.
\end{equation}
The main computational expense is solving shifted linear equations at the quadrature nodes, after which the approximation is formed from linear combinations of the solutions. There are three particularly attractive features of this computational framework. First, simple quadrature schemes often achieve rapid convergence and, consequently, highly accurate solutions are obtained by solving a few linear systems. Second, each shifted linear equation can be solved independently of the others, making the scheme trivially parallelizable. Third, each shifted linear equation need only be solved once for a given $x\in\mathcal{X}$. Afterwords, the approximation  of $K(t)x$ can be evaluated at any time $t\geq 0$ via linear combinations of vectors in $\mathcal{X}$, with no matrix-level computations.

In contrast to the matrix case, operators that generate semigroups associated with partial differential equations and stochastic processes are generally infinite-dimensional and unbounded. The spectrum of $A$ is unbounded and analogs of~\cref{eqn:cont_idea} involve unbounded contours (see~\cref{sec:func_calc}). The key to efficient contour integral methods in this setting is the design of rapidly convergent quadrature rules for the associated improper integrals. Such approximations have been developed for classes of elliptic partial differential operators~\cite{sheen2000parallel,sheen2003parallel}, positive operators in Banach spaces~\cite{gavrilyuk2001exponentially,gavrilyuk2005exponentially},  and in related contexts~\cite{hale2015contour,hale2008computing}. These works exploit \textit{analyticity in the semigroup} generated by $A$, which allows one to deform the contour integral into the left half-plane and leverage fast quadrature rules for smooth, exponentially decaying integrands~\cite{talbot1979accurate,duffy1993numerical,weideman2006optimizing,trefethen2006talbot} (see the left panel of~\cref{fig:semigroup_geometry}). Recently, Colbrook derived the first numerically stable and near-exponentially convergent quadratures for general analytic semigroups~\cite{colbrook2022computing}.

\begin{figure}
    \centering
    \begin{minipage}{0.48\textwidth}
        \begin{overpic}[width=\textwidth]{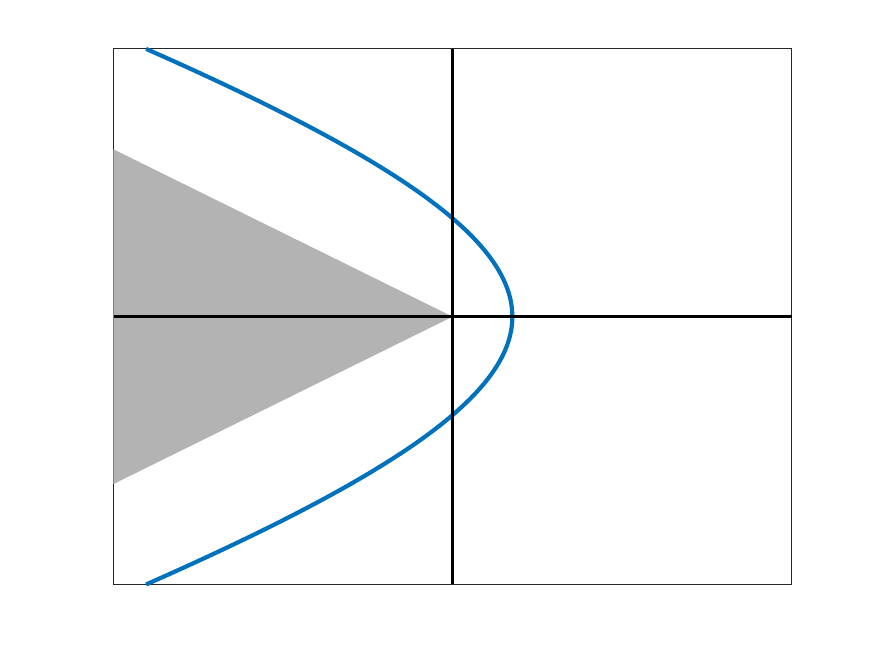}
        \put (48,1) {$\displaystyle {\rm Re}\,z$}
        \put (2,36) {\rotatebox{90}{$\displaystyle {\rm Im}\,z$}}
        \put (29,72) {Analytic Semigroup}
        \put (60,40) {rapid}
        \put (60,34) {decay}
        \put (48,46) {\tikz\draw[->,line width=1pt] (0,0) -- (-0.8,0.7);}
        \put (48,20) {\tikz\draw[->,line width=1pt] (0,0) -- (-0.8,-0.7);}
        \end{overpic}
    \end{minipage}
    \begin{minipage}{0.48\textwidth}
        \begin{overpic}[width=\textwidth]{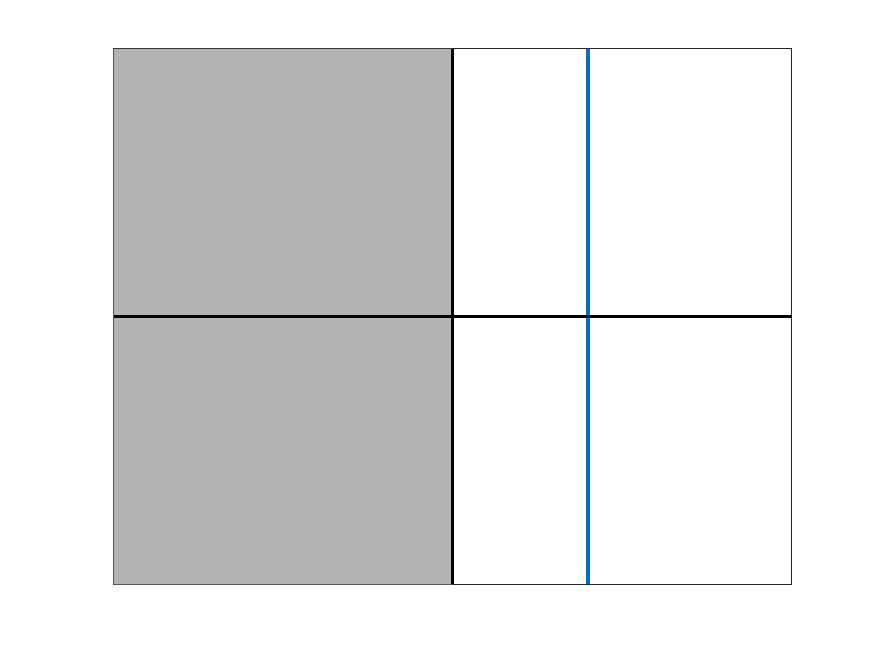}
        \put (48,1) {$\displaystyle  {\rm Re}\,z$}
        \put (2,36) {\rotatebox{90}{$\displaystyle {\rm Im}\,z$}}
        \put (13,72) {Strongly Continuous Semigroup}
        \put (70,40) {slow}
        \put (69,34) {decay}
        \put (75,46) {\tikz\draw[->,line width=1pt] (0,0) -- (0,0.75);}
        \put (75,20) {\tikz\draw[->,line width=1pt] (0,0) -- (0,-0.75);}
        \end{overpic}
    \end{minipage}
    \caption{\label{fig:semigroup_geometry} When $A$ generates an analytic semigroup, its spectrum is contained in a sector (shaded  grey) and efficient contour integral methods leverage a rapidly decaying integrand along the contour (blue line) in the left half-plane~\cite{colbrook2022computing}. For strongly continuous semigroups, the spectrum may fill the whole left half-plane and standard contour integral representations may have a slowly decaying integrand.}
\end{figure}

While analytic semigroups comprise an important class of parabolic-like evolution equations, many time-dependent processes associated with transport, waves, and other causal processes with limited regularity are studied in the setting of strongly continuous semigroups. However, there are theoretical obstacles to the development of fast, global, and highly-parallelizable contour integral schemes for this class. When $A$ generates a strongly continuous semigroup, its spectrum may fill the whole left half-plane and contour integral representations of $K(t)$ typically have exponentially growing integrands in the right half-plane. Consequently, contour deformation cannot provide any significant advantage (see the right panel of~\cref{fig:semigroup_geometry}). Furthermore, standard contour integral representations may not converge for all vectors $x\in\mathcal{X}$ (see~\cref{eqn:Brom_inv}). When they do converge, the integrand may decay very slowly. These features make it difficult to design effective quadrature rules for strongly continuous semigroups.

To overcome these obstacles, we propose a new family of contour integral methods for strongly continuous semigroups that exploits \textit{regularity in initial vectors} rather than the semigroup. This idea is common in time-stepping schemes for evolution equations but appears to be unexplored in the context of contour integral schemes for semigroups~\cite{colbrook2022computing}. Therefore, we make the following contributions:
\begin{itemize}
\item We show how to use a regularized functional analysis for strongly continuous semigroups to accelerate contour integral methods when the initial vector is smooth. In particular, we derive a natural regularizing mechanism from first-principles.
\item We prove explicit computable error bounds that converge with rate $(m-1)$ in the number of quadrature nodes for initial vectors in $D(A^m)$ ($m\geq 2$).
\item We derive sharp asymptotics for the error bounds that yield explicit closed-form expressions for automatic near-optimal parameter selection in our scheme.
\end{itemize}
To achieve these results, we work in a ``discretization oblivious" paradigm, designing and analyzing the algorithm at the operator level before introducing and assessing the impact of discretizating the relevant infinite-dimensional operators~\cite{gilles2019continuous,horning2020feast,colbrook2021computing,horning2021computing,colbrook2022computing,colbrook2023computing,boulle2023learning,boulle2023elliptic}.
The main computational cost of our scheme is the numerical solution of shifted linear equations, $(z_k-A)u=x$, at the quadrature nodes $z_1,\ldots,z_N$. These can be solved with the user's choice of discretization or approximation technique. To make our scheme compatible with end-to-end error analysis for verified numerics, we also derive residual-based error bounds that are easy to compute for many standard discretizations.   

The paper is organized as follows. In~\cref{sec:background}, we review a few key properties of strongly continuous semigroups and their contour integral representations. In~\cref{sec:quad_schemes}, we introduce a family of high-order contour-integral methods accompanied by explicit error bounds and closed-form quasi-optimal parameter selection. \Cref{sec:practice} discusses practical algorithmic considerations related to inexact quadrature samples and approximation of infinite-dimensional operators. Proofs of the main results are given in~\cref{sec:error_bounds} and numerical experiments involving continuous-time Koopman semigroups are conducted in~\cref{sec:num_exp}. Technical details related to integration and quadrature in a Banach space are worked out in~\cref{app:quad_banach} and some useful asymptotic identities for hypergeometric functions are summarized in~\cref{app:hypogeo_func}.

\section{Contour integral representations of semigroups}\label{sec:background}

This section reviews some key facts from the theory of strongly continuous semigroups~\cite[Ch.~II]{engel2000one} and introduces a functional calculus that connects the operator exponential to the resolvent via regularized contour integrals~\cite{batty2013holomorphic}. The contour integral representations are the starting point for our numerical approximations and the semigroup theory provides tools to derive explicit bounds for the approximation error.

\subsection{Strongly continuous semigroups}\label{sec:sc_semigroups}

A strongly continuous semigroup on the Banach space $\mathcal{X}$ is a semigroup $(K(t))_{t\geq 0}$ of bounded linear operators on $\mathcal{X}$ such that each map $u_x:\mathbb{R}_+\rightarrow\mathcal{X}$ of the form $u_x(t)=K(t)x$ depends continuously on the parameter $t$. The generator of the semigroup is
$$
Ax = \lim_{t\rightarrow 0} \frac{1}{t}(K(t)x - x),\quad\text{with domain}\quad D(A)=\{x \in \mathcal{X}\,|\, \lim_{t\rightarrow 0}\smash{\frac{1}{t}} (K(t)x-x) \,\,{\rm exists}\}.
$$
When $D(A)=\mathcal{X}$, then $A$ is bounded and $(K(t))_{t\geq 0}$ is uniformly continuous on $\mathcal{X}$. In general, $A$ may be unbounded but it is always closed\footnote{A closed operator has a graph, the collection of pairs $(x,Ax)$ with $x\in D(A)$, that is closed in $\mathcal{X}\times \mathcal{X}$.} and densely defined in $\mathcal{X}$.

Conversely, the classic generation theorems of Hille, Yosida, Feller, Miyadera, and Phillips~\cite[Ch.~II, Thm.~3.5,~3.8]{engel2000one} characterize precisely which linear operators generate strongly continuous semigroups on $\mathcal{X}$ in terms of the resolvent operator, defined by
\begin{equation}
    R_A(z) = (z-A)^{-1}, \qquad\text{for all}\qquad z\in\rho(A).
\end{equation}
Here, $\rho(A)=\{z\in\mathbb{C} \,|\, (z-A):\mathcal{X}\rightarrow\mathcal{X}$ is bijective$\}$ is the resolvent set, an open set on which $R_A(z)$ is an analytic operator-valued function. The spectrum of $A$, where invertibility of $z-A$ fails, is the closed set $\lambda(A)=\mathbb{C}\setminus\rho(A)$. The norm on $\mathcal{X}$ is denoted by $\|\cdot\|$. For any strongly continuous semigroup, there are $M\geq 1$ and $\omega\in\mathbb{R}$ such that the semigroup satisfies a growth bound as in (a) below~\cite[Ch.~I, Prop.~5.5]{engel2000one}.
\begin{theorem}\label{thm:generation}
    Given a linear operator $A:D(A)\rightarrow\mathcal{X}$ on a Banach space $\mathcal{X}$ and constants $M\geq 1$ and $\omega\in\mathbb{R}$, the following statements are equivalent:
    \begin{itemize}
        \item[(a)] $A$ generates a strongly continuous semigroup $(K(t))_{t\geq 0}$ satisfying the growth bound
        $$
        \lVert K(t)\rVert \leq M\exp(\omega t)\qquad\text{for all}\qquad t\geq 0.
        $$
        \item[(b)] $A$ is closed, densely defined, and for every $z\in\mathbb{C}$ with ${\rm Re}\,z>\omega$, it holds that $z\in\rho(A)$ and
        $$
        \|R_A(z)^n\|\leq \frac{M}{({\rm Re}\,z - \omega)^n}, \qquad \text{for all}\qquad n\in\mathbb{N}_{\geq 0}.
        $$
    \end{itemize}
\end{theorem}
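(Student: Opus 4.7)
The statement is the classical Hille--Yosida--Feller--Miyadera--Phillips generation theorem, and the plan is to prove the two implications separately, sketching only the main structural steps rather than grinding through routine Bochner-integral manipulations.

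For (a) $\Rightarrow$ (b), the Laplace transform does all the work. Under the growth bound $\lVert K(t)\rVert \leq M e^{\omega t}$, for ${\rm Re}\,z > \omega$ the Bochner integral $R(z) x := \int_0^\infty e^{-zt} K(t) x\, dt$ converges absolutely and defines a bounded operator with $\lVert R(z)\rVert \leq M/({\rm Re}\,z - \omega)$. The semigroup property together with the fact that $\tfrac{d}{dt} K(t) x = A K(t) x$ on $D(A)$ gives, via integration by parts, $R(z) = R_A(z)$, so $z \in \rho(A)$. Differentiating $(n-1)$ times under the integral sign yields
\[
R_A(z)^n x = \frac{1}{(n-1)!} \int_0^\infty t^{n-1} e^{-zt} K(t) x \, dt,
\]
and taking norms produces the required power bound. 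Closedness of $A$ is a standard consequence of strong continuity, and density of $D(A)$ follows because the averages $\tfrac{1}{h} \int_0^h K(s) x \, ds$ lie in $D(A)$ and converge to $x$ as $h \to 0^+$.

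For the harder reverse implication (b) $\Rightarrow$ (a), I would construct the semigroup via the Yosida approximation. Define the bounded operators
\[
A_\lambda := \lambda A R_A(\lambda) = \lambda^2 R_A(\lambda) - \lambda I, \qquad \lambda > \omega,
\]
and set $K_\lambda(t) := \exp(t A_\lambda)$. Expanding
\[
K_\lambda(t) = e^{-\lambda t} \sum_{n=0}^\infty \frac{(\lambda^2 t)^n}{n!} R_A(\lambda)^n
\]
and inserting the hypothesized power bound yields the crucial uniform estimate $\lVert K_\lambda(t)\rVert \leq M \exp(\omega \lambda t/(\lambda - \omega))$, which tends to $M e^{\omega t}$ as $\lambda \to \infty$. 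For $x \in D(A)$, the identity $A_\lambda x = \lambda R_A(\lambda) A x$ together with the strong convergence $\lambda R_A(\lambda) \to I$ on $\mathcal{X}$ (first on $D(A)$ via $\lambda R_A(\lambda) x - x = R_A(\lambda) A x$, then on $\mathcal{X}$ by density and the resolvent bound) shows $A_\lambda x \to A x$. A Duhamel-type estimate
\[
\lVert K_\lambda(t) x - K_\mu(t) x\rVert \leq \int_0^t \lVert K_\mu(t-s) K_\lambda(s) (A_\lambda - A_\mu) x\rVert \, ds
\]
combined with the uniform growth bound then proves $(K_\lambda(t) x)_{\lambda > \omega}$ is Cauchy for $x \in D(A)$, defining $K(t) x := \lim_{\lambda \to \infty} K_\lambda(t) x$. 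By density and the uniform bound, $K(t)$ extends to a bounded operator with $\lVert K(t)\rVert \leq M e^{\omega t}$; the semigroup law and strong continuity pass to the limit, and a short computation identifies the generator as $A$.

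The principal obstacle is the sharp uniform estimate $\lVert K_\lambda(t)\rVert \leq M \exp(\omega\lambda t/(\lambda - \omega))$. A bound on $\lVert R_A(\lambda)\rVert$ alone is insufficient: summing the exponential series term by term with only first-power estimates would introduce a factor of $M$ in every summand and produce growth of order $e^{M\omega t}$ rather than $e^{\omega t}$. The hypothesis on $\lVert R_A(\lambda)^n\rVert$ is precisely calibrated so that the geometric sum reproduces a single factor of $M$ and the correct exponential rate $\omega$; this is exactly why the theorem cannot be formulated purely in terms of the resolvent norm and reduces neatly to the Hille--Yosida contraction theorem when $M = 1$.
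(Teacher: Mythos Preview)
Your sketch is the standard Yosida-approximation proof of the Feller--Miyadera--Phillips theorem and is correct as far as it goes. However, the paper does not actually prove this statement: it is quoted as background from Engel--Nagel~[Ch.~II, Thm.~3.5,~3.8] and used only as input for the quadrature error analysis (specifically the resolvent bound in~(b) with $n=1$). So there is nothing to compare against; your argument is essentially the one in the cited reference, and the paper's ``proof'' is simply the citation.
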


In the remainder of this paper, we assume that $A$ generates a strongly continuous semigroup as in (a) with $\omega=0$. Note that there is no loss of generality. If $\tilde A$ generates a $C^0$-semigroup with constants $M$ and $\omega$ in (a) of~\cref{thm:generation}, then $A = \tilde A - \omega$ generates a $C^0$-semigroup with constants $M$ and $0$. For our algorithmic framework, the key facts are the resolvent's analyticity and uniform boundedness in the half-plane ${\rm Re}\,z>\omega$.

\subsection{Functional calculus for semigroups}\label{sec:func_calc}

For a strongly continuous semigroup, the resolvent of the generator is the operator-valued Laplace transform of the operator exponential~\cite[Ch.~II, Theorem~1.10]{engel2000one},
\begin{equation}\label{eqn:lap_transform}
R_A(z)x = \int_{0}^\infty e^{-zt}\exp(At)x\,dt, \qquad \text{for all}  \qquad z\in\rho(A), \quad x\in\mathcal{X}.
\end{equation}
The inversion formula holds for all $x\in D(A)$. Given $\delta > 0$~\cite[Theorem~11.6.1]{hille1996functional},
\begin{equation}\label{eqn:Brom_inv}
\exp(At)x = \frac{1}{2\pi i}\int_{\delta-i\infty}^{\delta+i\infty} e^{zt}R_A(z)x\,dz, \qquad \text{for all} \qquad t>0, \quad x\in D(A).
\end{equation}
While the inversion formula provides a representation of the semigroup as a contour integral, analogous to~\cref{eqn:cont_idea}, the improper integral may not converge absolutely. Therefore, it is an unreliable starting point for a numerical approximation scheme. 

Instead, we follow Colbrook~\cite{colbrook2022computing} and use a functional calculus for half-plane operators to obtain an absolutely convergent analog of~\cref{eqn:Brom_inv}. Given a function $r(z)$ that is (i) analytic in an open half-plane containing the contour in~\cref{eqn:Brom_inv} and (ii) decays like $|r(z)|=\mathcal{O}(|z|^{-1-\sigma})$ for some $\sigma>0$, it holds that $K(t) = \exp(At)$ with~\cite{batty2013holomorphic}
\begin{equation}\label{eqn:func_calc}
    r(A)\exp(At)x = \frac{1}{2\pi i}\int_{\delta-i\infty}^{\delta+i\infty} r(z) e^{zt} R_A(z)x\,dz, \qquad \text{for all} \qquad t>0, \quad x\in \mathcal{X}.
\end{equation}
This improper integral converges absolutely and uniformly on $\mathcal{X}$. Consequently, we can approximate $\exp(At)x$ by approximating the integral and inverting $r(A)$. While Colbrook uses $r(z)=(\delta+1-z)^{-2}$ and focuses on computability and classification in the strongly continuous case~\cite{colbrook2022computing}, we use a family of higher-degree rational functions that exploit regularity in the vector $x$ for high-order accurate quadrature approximations.

\section{Quadrature approximations of semigroups}\label{sec:quad_schemes}

Our approximation scheme begins with the integral representation suggested in~\cref{eqn:func_calc},
\begin{equation}\label{eqn:reg_op_exp}
    \exp(At)x =r(A)^{-1}\left[\frac{1}{2\pi i}\int_{\delta-i\infty}^{\delta+i\infty} r(z) e^{zt} R_A(z)\,dz\right]x.
\end{equation}
Given an appropriate set of quadrature nodes, $z_{-N},\ldots,z_{N}$, along the contour with quadrature weights, $w_{-N},\ldots,w_N$, the corresponding quadrature approximation is
\begin{equation}\label{eqn:quad_approx}
\exp(At)x \approx r(A)^{-1}\left[\frac{1}{2\pi i}\sum_{k=-N}^N w_kr(z_k)e^{z_k t}R_A(z_k)\right]x.    
\end{equation}
To form the approximation in practice, one first samples the resolvent at each quadrature node by solving the linear systems $(z_k-A)u_k=x$ for $u_k\in D(A)$. To approximate $\exp(At)x$ at any time $t>0$, one computes the time-dependent weights, evaluates the linear combination of vectors $u_k$, $-N\leq k\leq N$, in brackets, and applies $r(A)^{-1}$.

The performance of this computational framework depends heavily on the choice of regularizing function $r(z)$ used in the half-plane functional calculus. Among other factors, it influences the cost of computing the operator-valued function $r(A)^{-1}$ and the accuracy of the quadrature rule. Therefore, this section focuses on the key question:

\begin{quote}
\begin{center}
    \textbf{How should we choose the regularizer $r(z)$?}
\end{center}
\end{quote}
After a simple illustration of the role of the regularizer in~\cref{sec:example_analysis}, we derive principled criteria for $r(z)$ in~\cref{sec:choose_regularizer} and propose a family of high-order schemes accompanied by explicit error bounds and automatic parameter selection in~\cref{sec:high_order_scheme}.

\subsection{Analysis of a simple regularized quadrature scheme}\label{sec:example_analysis}

\begin{figure}
    \centering
    \begin{minipage}{0.48\textwidth}
        \begin{overpic}[width=\textwidth]{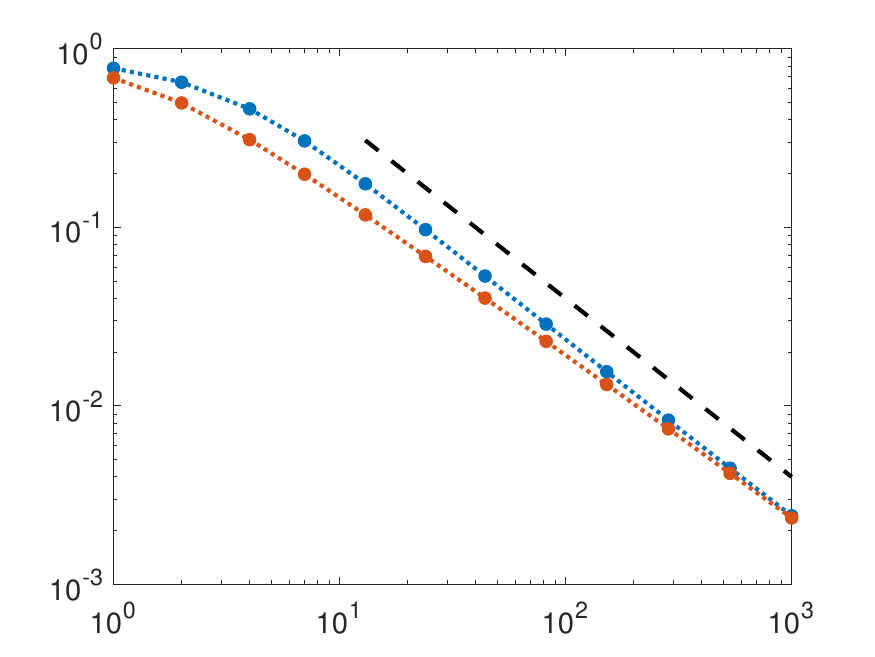}
        \put (48,1) {$\displaystyle N$}
        \put (20,72) {Error bounds (\Cref{thm:trap_rule_1_error})}
        \put (60,48) {\rotatebox{-40}{$\mathcal{O}(1/N)$}}
        \end{overpic}
    \end{minipage}
    \begin{minipage}{0.48\textwidth}
        \begin{overpic}[width=\textwidth]{figures/geometry_strongcont_sg.pdf}
        \put (48,1) {$\displaystyle  {\rm Re}\,z$}
        \put (2,36) {\rotatebox{90}{$\displaystyle {\rm Im}\,z$}}
        \put (68,42) {$\displaystyle \mathcal{O}(|z|^{-2})$}
        \put (75,50) {\tikz\draw[->,line width=1pt] (0,0) -- (0,0.75);}
        \put (75,24) {\tikz\draw[->,line width=1pt] (0,0) -- (0,-0.75);}
        \end{overpic}
    \end{minipage}
    \caption{\label{fig:2nd_order_bounds} In the left-hand panel, the total error bound (for $E_D+E_T$) from~\cref{thm:trap_rule_1_error} is plotted as a function of $N=$ `number of quadrature nodes' at $t=1$ with $\delta=a=2$ when the discretization parameter is fixed at $h=0.5$ (blue circles) and optimized numerically to minimize the total error bound (orange circles). The bounds are plotted with the normalization $\|(\delta+a-A)^2x\|=1$ and compared with the asymptotic convergence rate $\mathcal{O}(1/N)$ (black dashed line). The right panel illustrates the decay of the integrand along the contour due to the regularizer $r(z)=(\delta+a-z)^{-2}$.}
\end{figure}

To develop criteria for $r(z)$, we illustrate a few key points by analyzing the approximation error in~\cref{eqn:quad_approx} for the simple choice $r(z)=(\delta+a-z)^{-2}$ with $a>0$. This choice is the simplest rational function that satisfies the analyticity and decay conditions required by the functional calculus in~\cref{sec:func_calc}. It also coincides with Colbrook's choice when $a=1$~\cite{colbrook2022computing}. The integral representation of the operator exponential is then
\begin{equation}\label{eqn:reg_op_exp2}
    \exp(At)x = (\delta+a-A)^2\left[\frac{1}{2\pi i}\int_{\delta-i\infty}^{\delta+i\infty} \frac{e^{zt} R_A(z)}{(\delta+a-z)^2}\,dz\right]x.
\end{equation}
Notice that the integrand is analytic and uniformly integrable along vertical contours within any strip of the form $\alpha\leq{\rm Re}\,z\leq\beta$ with $0<\alpha<\beta<\delta + a$.

To discretize this integral, we use the trapezoidal rule because it is accompanied by simple and sharp error bounds for integrands that are analytic and uniformly integrable in a complex strip~\cite{trefethen2014exponentially}.\footnote{While the trapezoidal rule is a natural choice due to the `strip' geometry of our analytic integrand, one may obtain similar results by truncating the contour and applying, e.g., a Gauss or Clenshaw-Curtis rule~\cite{trefethen2008gauss}.} Given $2N+1$ quadrature nodes with equal spacing $h$, i.e., $z_k = \delta + ihk$ for $-N\leq k\leq N$, the trapezoidal approximation to~\cref{eqn:reg_op_exp2} is
\begin{equation}\label{eqn:quad_approx_simple}
\exp(At)x \approx (\delta+a-A)^2\left[\frac{h}{2\pi}\sum_{k=-N}^N \frac{e^{(\delta+i h k) t}}{(a-i h k)^2}R_A(\delta + i h k)\right]x.    
\end{equation}
A simple extension of the trapezoidal rule's error analysis to the Banach space setting (see~\cref{app:quad_banach}) bounds the difference between bracketed terms in~\cref{eqn:reg_op_exp2,eqn:quad_approx_simple}. 

However, notice that $r(A)^{-1}=(\delta+a-A)^2$ is an unbounded operator on $\mathcal{X}$. Without knowledge of its regularity, the difference in the bracketed terms may be amplified without bound. To control this amplification, we require that $x\in D(r(A)^{-1}) = D(A^2)$. In this case, $r(A)^{-1}$ commutes with the bracketed terms in~\cref{eqn:reg_op_exp2,eqn:quad_approx_simple}. By incorporating $r(A)^{-1}$ into the integrand, its influence on the quadrature error is quantified by the size of the vector $r(A)^{-1}x$ in the error analysis for the trapezoidal rule.

\begin{theorem}\label{thm:trap_rule_1_error}
     Suppose that $A:D(A)\rightarrow\mathcal{X}$ satisfies condition (a) in~\cref{thm:generation} with constants $M$ and $\omega=0$. Given contour location $\delta>0$ and quadrature parameters $h>0$, $N\in\mathbb{N}_{\geq 1}$, then the approximation error $\|E_{h,N}(t;\delta)\|=E_{\rm D} + E_{\rm T}$ in~\cref{eqn:quad_approx_simple} at time $t>0$ satisfies the bound on the discretization error
     $$
     E_{\rm D} \leq \frac{M e^{\delta t}}{\delta a}\left[\frac{4e^{\sigma t/2}}{e^{\sigma\pi/h}-1}\right]\lVert(\delta+a-A)^2x\rVert,
     $$
     and the bound on the truncation error
     $$
     E_{\rm T} \leq \frac{M e^{\delta t}}{\delta a}\left[\frac{1}{2}-\frac{1}{\pi}\arctan\left(\frac{hN}{a}\right)\right]\lVert(\delta+a-A)^2x\rVert
     $$
     where $\sigma = \min(\delta,a)$, provided that $x$ satisfies the regularity condition $x\in D(A^2)$.
\end{theorem}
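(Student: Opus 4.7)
The plan is to reduce the operator-level quadrature error to a scalar trapezoidal-rule error for a single $\mathcal{X}$-valued analytic integrand, evaluated at the regularized vector $y := (\delta+a-A)^2 x$, and then decompose the resulting quadrature error into a truncation part and a discretization part. Because $x \in D(A^2)$, $y$ is well-defined, and $(\delta+a-A)^2$, being a polynomial in a closed operator, commutes with $R_A(z)$ on $D(A^2)$. Standard Bochner-integration and dominated-convergence arguments then justify pushing $r(A)^{-1}$ inside both the improper integral in~\cref{eqn:reg_op_exp2} and the finite sum in~\cref{eqn:quad_approx_simple}, so that
$$
E_{h,N}(t;\delta) \;=\; \frac{1}{2\pi i}\int_{\delta-i\infty}^{\delta+i\infty} F(z)\,dz \;-\; \frac{h}{2\pi}\sum_{k=-N}^{N} F(z_k),
\qquad F(z) := \frac{e^{zt}\,R_A(z)\,y}{(\delta+a-z)^2}.
$$
Inserting the bi-infinite trapezoidal sum $Q_\infty := \frac{h}{2\pi}\sum_{k \in \mathbb{Z}} F(z_k)$ between these two objects splits the total error as $E_D + E_T$, where $E_T$ is the norm of the truncated tails and $E_D$ is the norm of the difference between the integral and $Q_\infty$.

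For the truncation bound, I would use the resolvent estimate from~\cref{thm:generation}(b) at $z_k = \delta + ihk$, which gives $\|R_A(z_k)\| \leq M/\delta$ and hence $\|F(z_k)\| \leq M e^{\delta t}\|y\| / \bigl(\delta(a^2+(hk)^2)\bigr)$. Since $u \mapsto 1/(a^2+u^2)$ is decreasing on $[0,\infty)$, each tail sum is dominated by an improper integral,
$$
h\sum_{k > N}\frac{1}{a^2 + (hk)^2} \;\leq\; \int_{hN}^{\infty}\!\frac{du}{a^2 + u^2} \;=\; \frac{1}{a}\Bigl[\tfrac{\pi}{2} - \arctan\!\bigl(\tfrac{hN}{a}\bigr)\Bigr],
$$
with the analogous bound for $k < -N$. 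Collecting the prefactor $1/(2\pi)$ and the factor $2$ from both tails yields exactly the stated bound on $E_T$.

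For the discretization bound, I would invoke the Banach-space strip-trapezoidal rule developed in~\cref{app:quad_banach}. The integrand $F$ extends to an $\mathcal{X}$-valued analytic function on the vertical strip $0 < \mathrm{Re}\,z < \delta + a$, since $R_A$ is analytic for $\mathrm{Re}\,z > 0$ and the only singularity of $r(z) = (\delta+a-z)^{-2}$ is the double pole at $z = \delta + a$. Shifting the contour by $\pm\sigma/2$ with $\sigma = \min(\delta,a)$ stays safely inside this strip: on the right shifted contour $\mathrm{Re}\,z = \delta + \sigma/2$ the factor $|e^{zt}| = e^{(\delta+\sigma/2)t}$ produces the $e^{\sigma t/2}$ in the final bound, while $\|R_A(z)\| \leq 2M/\delta$ and $|\delta+a-z|^2 \geq (a/2)^2 + \tau^2$ (using $\sigma \leq a$) combine so that the arc-length integral of $\|F\|$ along this contour is bounded by a constant multiple of $Me^{(\delta+\sigma/2)t}\|y\|/(\delta a)$; the left shifted contour is estimated analogously. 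The strip-trapezoidal bound then supplies the exponential factor $1/(e^{2\pi(\sigma/2)/h} - 1) = 1/(e^{\sigma\pi/h}-1)$, and tallying prefactors recovers the stated bound on $E_D$.

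The main technical obstacle is the initial commutation step, since $(\delta+a-A)^2$ is unbounded and must be moved across both an improper Bochner-type integral and a finite sum. The regularity assumption $x \in D(A^2)$ is what makes this possible: it guarantees $R_A(z)x \in D(A^2)$ for every $z \in \rho(A)$ which, together with closedness of $A$ and uniform operator-norm integrability of the regularized integrand, lets $r(A)^{-1}$ be pulled inside the integral and sum without incurring domain or boundary terms. Once that reduction is in place, the $E_T$ and $E_D$ bounds follow from the resolvent estimate in~\cref{thm:generation}(b) and the Banach-space trapezoidal rule of~\cref{app:quad_banach} by routine quantitative estimation.
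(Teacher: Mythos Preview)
Your proposal is correct and follows essentially the same approach as the paper: commute $r(A)^{-1}=(\delta+a-A)^2$ inside using $x\in D(A^2)$, split the error as discretization plus truncation, bound $E_T$ via the resolvent estimate and an integral comparison for the monotone tails, and bound $E_D$ via the Banach-space trapezoidal bound of~\cref{app:quad_banach} applied on a strip of half-width $\sigma/2$ with $C_f$ obtained by maximizing the factors $e^{(\delta-b)t}$, $M/(\delta-b)$, and $\int|r|$ separately. The paper carries out the detailed computation for the $m$th-order regularizer (\cref{thm:trap_rule_2_error}) and remarks that \cref{thm:trap_rule_1_error} is ``completely analogous, except that the strip of analyticity is taken with width $\sigma=\min\{a,\delta\}$''; your write-up is precisely that analogous computation, phrased in terms of shifted vertical contours in the $z$-plane rather than the parametrized $s$-variable, which is only a cosmetic difference.
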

\begin{proof}
    See~\cref{sec:error_bounds}.
\end{proof}

The error bound in~\cref{thm:trap_rule_1_error} is a sum of two terms. The first term corresponds to the discretization error, the error made when replacing the integral in~\cref{eqn:reg_op_exp2} by an infinite series of samples at equally spaced quadrature nodes along the contour. The second term corresponds to the truncation of this infinite series to a centered combination of just $2N+1$ samples. The discretization error depends only on the node spacing and vanishes exponentially fast as the node spacing $h\rightarrow 0$. However, the truncation error depends on the decay of the integrand at the equally spaced nodes along the contour. Due to the slow decay of the integrand in~\cref{eqn:reg_op_exp2}, the truncation error bound in~\cref{thm:trap_rule_1_error} decreases at a rate of $\mathcal{O}(1/N)$ as $N\rightarrow\infty$ (see~\cref{fig:2nd_order_bounds}).

\begin{figure}
    \centering
    \begin{minipage}{0.48\textwidth}
        \begin{overpic}[width=\textwidth]{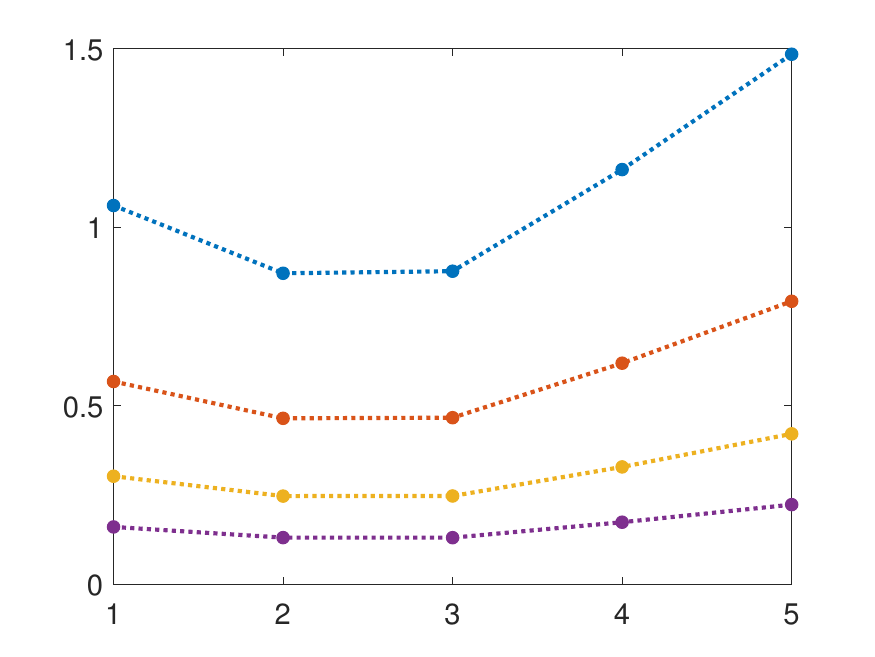}
        \put (48,1) {$\displaystyle a$}
        \put (20,72) {Error bounds (\Cref{thm:trap_rule_1_error})}
        \put (68,48) {\rotatebox{34}{$\displaystyle N=100$}}
        \put (67,35) {\rotatebox{20}{$\displaystyle N=200$}}
        \put (67,23) {\rotatebox{9}{$\displaystyle N=400$}}
        \put (68,10) {\rotatebox{5}{$\displaystyle N=800$}}
        \end{overpic}
    \end{minipage}
    \begin{minipage}{0.48\textwidth}
        \begin{overpic}[width=\textwidth]{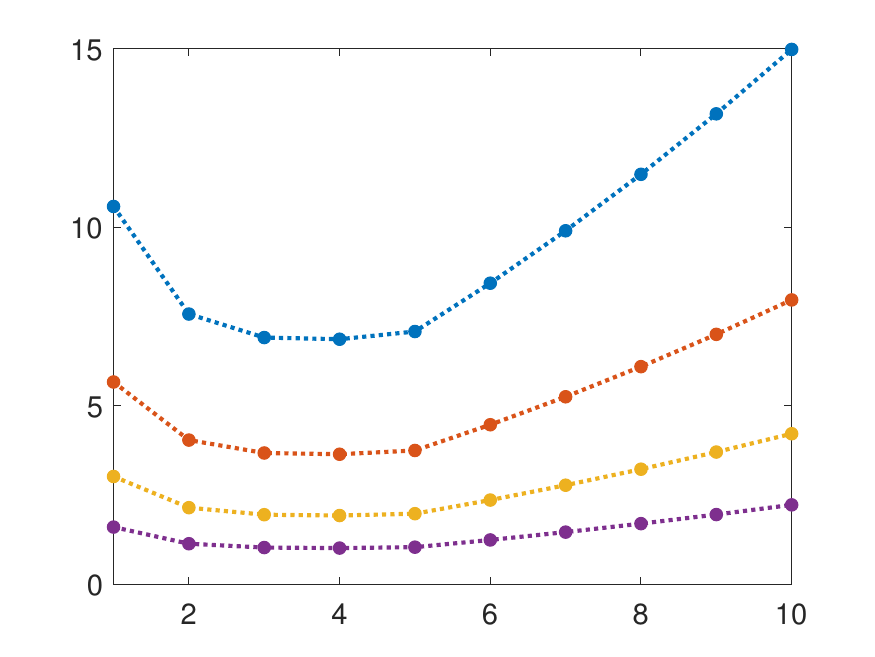}
        \put (48,1) {$\displaystyle a$}
        \put (20,72) {Error bounds (\Cref{thm:trap_rule_1_error})}
        \put (69,47) {\rotatebox{38}{$\displaystyle N=100$}}
        \put (67,34) {\rotatebox{24}{$\displaystyle N=200$}}
        \put (67,23) {\rotatebox{10}{$\displaystyle N=400$}}
        \put (68,9) {\rotatebox{5}{$\displaystyle N=800$}}
        \end{overpic}
    \end{minipage}
    \caption{\label{fig:pole_opt} The total error bound (for $E_D+E_T$) from~\cref{thm:trap_rule_1_error} is plotted as a function of $a=$ `pole location' of the regularizar $r(z)=(\delta+a-z)^{-2}$ for contour locations $\delta = 3$ (left panel) and $\delta=5$ (left panel) and $N=100$ (blue), $N=200$ (red), $N=400$ (yellow), and $N=800$ (purple). In both panels, the error bound was computed at $t=1$ and normalized so that $\|(\delta+a-A)^2x\|=(\delta+a)^2$. The discretization parameter $h$ was optimized numerically to minimize the total error bound.}
\end{figure}

\subsection{Criteria for the regularizing function}\label{sec:choose_regularizer}

Having analyzed the basic performance of the quadrature approximation for a simple regularizer in~\cref{sec:example_analysis}, we now identify three key factors to constrain and guide our choice of $r(z)$. Our goal is to choose a regularizer that, if possible, leverages the regularity of $x$ to accelerate the convergence of the quadrature approximation.

\textbf{Factor 1: Complexity of $r(A)^{-1}$.} To avoid making the problem more difficult than the original goal of computing $\exp(At)$, it is reasonable to require that $r(z)$ be a rational function, i.e., the ratio of two polynomials. In this case, $r(A)^{-1}$ is also rational and can be computed by applying powers of $A$ and its resolvent to vectors in $\mathcal{X}$. To keep the computational cost low, we would like the degrees of the numerator, $p$, and denominator, $q$, to be as low as possible. Therefore, we assume (without loss of generality) that the numerator and denominator have no common factors. Moreover, the decay requirement for $r(z)$ in~\cref{eqn:func_calc} indicates that $q\geq 2$ and $0\leq p\leq q-2$. 

\textbf{Factor 2: Regularity of $x$.} The operator $r(A)^{-1}$ will typically be an unbounded operator on $\mathcal{X}$ due to the decay condition in~\cref{eqn:func_calc}, as we encountered in~\cref{sec:choose_regularizer}. To develop rigorous error bounds for our approximation scheme, we mitigate the power of $r(A)^{-1}$ to amplify errors in the quadrature approximation when passing from~\cref{eqn:reg_op_exp} to~\cref{eqn:quad_approx} by leveraging the regularity of $x$. If $x\in D(r(A)^{-1})=D(A^{q-p})$, the order of operations in~\cref{eqn:reg_op_exp} can be changed so that 
\begin{equation}\label{eqn:reg_op_exp3}
    \exp(At)x =\frac{1}{2\pi i}\int_{\delta-i\infty}^{\delta+i\infty} r(z) e^{zt} R_A(z)\left[r(A)^{-1}x\right]\,dz.
\end{equation}
By absorbing $r(A)^{-1}$ into the integrand, its impact on the quadrature error can be rigorously accounted for in error bounds, as demonstrated in~\cref{thm:trap_rule_1_error}. To enable the interchange in~\cref{eqn:reg_op_exp3}, we limit the degree of the denominator in $r(z)$ based on the regularity of $x$. So, if $x\in D(A^m)$ for some integer power $m$, we require that $q-p \leq m$.

\textbf{Factor 3: Analyticity and decay of $r(z)$.} \Cref{thm:trap_rule_1_error} illustrates how the quadrature error in the trapezoidal rule typically improves as two features of the integrand increase: the width of its strip of analyticity around the contour and the rate of its decay along the contour. In light of the conditions from Factors 1 and 2, the maximum rate of decay for $r(z)$ is achieved with $q-p=m$. Noting that the integrand is analytic in the strip $0<{\rm Re}\,z \leq \delta$ to the left of the contour, it is reasonable to require a comparable strip of analyticity to the right of the contour. Therefore, we place the poles of $r(z)$ to the right of the vertical line ${\rm Re}\, z = 2\delta$. Note that this restriction avoids placing artificial restrictions on the strip of analyticity, allowing us to take larger values of the spacing, $h$, when balancing truncation and discretization errors. This amplifies the impact of each additional quadrature point on the truncation error as $N$ increases.

Considering Factors~1-3, we arrive at a rational regularizer of type $(p,q)$ with degree condition $q-p=m$ whose poles are to the right of the vertical line ${\rm Re}\,z=2\delta$. Since the choice of numerator polynomial is not explicitly constrained by Factors~2 or 3, we defer to Factor~1 and minimize the total degree of the regularizer by choosing $p=0$. The result is a rational function with poles $s_1,\ldots,s_m$ leading to
\begin{equation}\label{eqn:rational_regularizer}
    r(z) = (s_1-z)^{-1}\ldots(s_m-z)^{-1}, \qquad\text{where}\qquad {\rm Re}\,s_k\geq 2\delta, \quad k=1,\ldots,m.
\end{equation}
The corresponding operator-valued function to apply is $r(A)^{-1}=(s_1-A)\ldots(s_m-A)$.

To select the poles in~\cref{eqn:rational_regularizer}, consider the following heuristic `greedy' optimization procedure. Fixing all but one pole, where should this last pole be moved to reduce approximation error? To maximize the decay of the function in both directions along the vertical contour, the pole should clearly lie on the real axis. At first, one might be tempted to place the pole very far from the contour on the real axis, however, this comes with a price. As the pole is moved to the right along the real axis, the norm $\|r(A)^{-1}x\|$ increases proportionally to the decrease in the tails of $r(z)$. In other words, scaling $r(z)$ has no net effect because it is immediately offset by the presence of $r(A)^{-1}$. To account for this explicitly, suppose that $r(z)$ is normalized so that $r(\delta) = 1$. Then, the fastest decay is achieved when the pole is as close as possible to the contour (taking into account our earlier constraint), i.e., when the pole is placed at $z=2\delta$. Applying the argument to each pole, we obtain the poles $s_k = 2\delta$, for $k=1,\ldots,m$, in~\cref{eqn:rational_regularizer}.

\subsection{High-order approximation schemes}\label{sec:high_order_scheme}

With the new $m$th-order rational regularizer $r(z)=(2\delta-z)^{-m}$ in hand, we propose a family of high-order approximation schemes based on a trapezoidal approximation of~\cref{eqn:reg_op_exp}. Given an even integer order $m\geq 2$, the $m$th-order approximation is\footnote{In principle and practice, there is no problem using odd integer orders $m\geq 2$. However, some manipulations with hypergeometric functions in the error analysis simplify when $m/2$ is an integer.}
\begin{equation}\label{eqn:quad_approx_mth-order}
\exp(At)x \approx (2\delta-A)^m\left[\frac{h}{2\pi}\sum_{k=-N}^N \frac{e^{(\delta+i h k) t}}{(\delta-i h k)^m}R_A(\delta + i h k)\right]x.    
\end{equation}
When $x\in D(A^m)$, we provide explicit bounds on the approximation error in~\cref{eqn:quad_approx_mth-order}. Here, $\Gamma(x)$ is the gamma function and ${}_2 F_1(a,b;c;z)$ is the hypergeometric function~\cite{NIST:DLMF}.

\begin{theorem}\label{thm:trap_rule_2_error}
     Suppose that $A:D(A)\rightarrow\mathcal{X}$ satisfies condition (a) in~\cref{thm:generation} with constants $M$ and $\omega=0$. Given contour location $\delta>0$ and quadrature parameters $h>0$, $N\in\mathbb{N}_{\geq 1}$, then the approximation error $\|E_{h,N}(t;\delta)\|=E_{\rm D}+E_{\rm T}$ in~\cref{eqn:quad_approx_mth-order} at time $t>0$ satisfies the bound on the discretization error,
     $$
     E_{\rm D} \leq \frac{M e^{3\delta t/2}}{\delta^m(e^{\delta\pi/h}-1)} \left[\frac{2^{m+1}\Gamma\left(\frac{3}{2}\right)\Gamma\left(\frac{m-1}{2}\right)}{\pi\Gamma\left(\frac{m}{2}\right)}\right]\lVert(2\delta-A)^m x\rVert,
     $$
     and the bound on the truncation error,
     $$
     E_{\rm T} \leq  \frac{M e^{\delta t}}{\delta^m}\left[\frac{\Gamma\left(\frac{3}{2}\right)\Gamma\left(\frac{m-1}{2}\right)}{\pi\Gamma\left(\frac{m}{2}\right)}-\frac{hN}{\pi\delta} {}_2F_1\left(\frac{1}{2},\frac{m}{2};\frac{3}{2};-\left(\frac{hN}{\delta}\right)^2\right)\right]\lVert(2\delta-A)^m x\rVert,
     $$
     provided that $x$ satisfies the regularity condition $x\in D(A^m)$.
\end{theorem}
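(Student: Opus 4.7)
My plan is to mirror the proof of Theorem~\ref{thm:trap_rule_1_error}, with the degree-$m$ regularizer $r(z)=(2\delta-z)^{-m}$ replacing $(\delta+a-z)^{-2}$. Starting from the functional calculus representation~\cref{eqn:reg_op_exp} with $r(A)^{-1}=(2\delta-A)^m$ in front, I use the hypothesis $x\in D(A^m)=D(r(A)^{-1})$, together with the absolute convergence of the regularized integral and the closed-operator/Bochner-integral machinery of Appendix~\ref{app:quad_banach}, to pull $(2\delta-A)^m$ inside and write $\exp(At)x$ as a single operator-valued integral against the vector $(2\delta-A)^m x$. Parametrizing $z=\delta+iy$, I then split the approximation error into $E_{\rm D}+E_{\rm T}$ exactly as in Theorem~\ref{thm:trap_rule_1_error}: $E_{\rm D}$ accounts for replacing the integral by the bi-infinite trapezoidal sum with spacing $h$, and $E_{\rm T}$ for discarding the terms with $|k|>N$.

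For $E_{\rm T}$, I bound each tail term by $(M/\delta)\cdot e^{\delta t}\cdot(\delta^2+h^2k^2)^{-m/2}\cdot\|(2\delta-A)^m x\|$ using the resolvent bound from~\cref{thm:generation}(b) with $n=1$, $|e^{(\delta+ihk)t}|=e^{\delta t}$, and $|2\delta-(\delta+ihk)|^{-m}=(\delta^2+h^2k^2)^{-m/2}$. Monotonicity of $(\delta^2+y^2)^{-m/2}$ lets me compare the tail sum to the integral $\int_{|y|>hN}(\delta^2+y^2)^{-m/2}\,dy$, which I evaluate via $y=\delta\tan\theta$: the full real-line integral is a Beta-function constant, while the finite piece over $[-hN,hN]$ collapses, by the standard antiderivative identity recorded in Appendix~\ref{app:hypogeo_func}, to $(2hN/\delta^m)\,{}_2F_1(1/2,m/2;3/2;-(hN/\delta)^2)$. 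Subtracting and tidying up the prefactors yields the stated bound on $E_{\rm T}$.

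For $E_{\rm D}$, I invoke the Banach-space trapezoidal estimate of Appendix~\ref{app:quad_banach}. The integrand is analytic in the vertical strip $0<{\rm Re}\,z<2\delta$ (the resolvent is analytic in ${\rm Re}\,z>0$ and the pole of $r(z)$ sits at $z=2\delta$), so shifting the contour by $\pm\delta/2$ produces a geometric kernel factor of $1/(e^{\delta\pi/h}-1)$. The dominant contribution comes from the right-shifted contour ${\rm Re}\,z=3\delta/2$, where $|e^{zt}|=e^{3\delta t/2}$ (explaining the $e^{3\delta t/2}$ in the stated bound), the resolvent is uniformly bounded by $M/\delta$, and $|2\delta-z|^{-m}=((\delta/2)^2+y^2)^{-m/2}$ integrates over $\mathbb{R}$ to another Beta constant whose $(\delta/2)^{1-m}$ scaling produces the $2^{m-1}$ appearing in the constant. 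Combining with the subdominant left-shifted contour ${\rm Re}\,z=\delta/2$ and simplifying yields the prefactor $2^{m+1}\Gamma(3/2)\Gamma((m-1)/2)/(\pi\Gamma(m/2))$.

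The hard part will be the first step: rigorously interchanging the unbounded operator $(2\delta-A)^m$ with the improper operator-valued integral. This requires the regularity hypothesis $x\in D(A^m)$, the absolute convergence supplied by the degree-$m$ decay of $r(z)$, and the closed-operator manipulations for Banach-space Bochner integrals that are developed in Appendix~\ref{app:quad_banach}. Once that interchange is justified, the truncation and discretization estimates reduce to essentially scalar computations, and the remaining work amounts to careful bookkeeping of factors of $2$ and $\pi$ together with the $\tan^{-1}$/Beta/hypergeometric identities of Appendix~\ref{app:hypogeo_func}.
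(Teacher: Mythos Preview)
Your proposal is correct and follows essentially the same route as the paper: pull $(2\delta-A)^m$ inside using $x\in D(A^m)$, split into discretization plus truncation, bound the truncation tail by the integral and evaluate via the hypergeometric antiderivative, and bound the discretization error by the Banach-space trapezoidal estimate with half-strip width $\delta/2$. The one place your sketch diverges slightly is in assembling $C_f$: the paper does not isolate a single ``dominant'' shifted contour but (via Lemma~\ref{thm:uniform_int_bound}) maximizes each factor---$e^{(\delta-b)t}$, $M/(\delta-b)$, and $\int|r(\delta-b+is)|\,ds$---separately over the strip, which is how the $2M/\delta$, the $e^{3\delta t/2}$, and the $2^{m-1}$ combine to give exactly the stated $2^{m+1}$ constant (your ``$M/\delta$ on the right-shifted contour'' is not quite right and would not reproduce that factor).
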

\begin{proof}
    See~\cref{sec:error_bounds}.
\end{proof}

As in~\cref{thm:trap_rule_1_error}, the discretization error decreases exponentially with rate $\delta$ (the width of the integrand's strip of analyticity). The truncation error decreases at the algebraic rate of $\mathcal{O}((1/N)^{m-1})$ as $N\rightarrow\infty$, although this is obscured by the hypergeometric function. To see the scaling clearly, we expand the term in brackets to leading order in $hN/\delta$ (see~\cref{app:hypogeo_func}). The upper bound on the truncation error satisfies
\begin{equation}\label{eqn:asymptotic_truncation_bound}
    \left[\text{bound on}\,\,E_{\rm T}\right]\sim \frac{Me^{\delta t}}{\pi \delta^m}\left[\frac{1}{m-1}\left(\frac{\delta}{hN}\right)^{m-1}\right]\|(2\delta-A)^mx\|, \qquad\text{as}\qquad N\rightarrow\infty.
\end{equation}
In other words, the $m$th order scheme exploits regularity in $x$ to achieve convergence at a rate of $\mathcal{O}(1/N^{m-1})$ as $N\rightarrow\infty$. We also note that when $a=\delta$ in~\cref{thm:trap_rule_1_error} and $m=2$ in~\cref{thm:trap_rule_2_error}, the error bounds in the two theorems coincide exactly (see~\cref{app:hypogeo_func} for simplification of the gamma and hypergeometric functions).

\begin{figure}
    \centering
    \begin{minipage}{0.48\textwidth}
        \begin{overpic}[width=\textwidth]{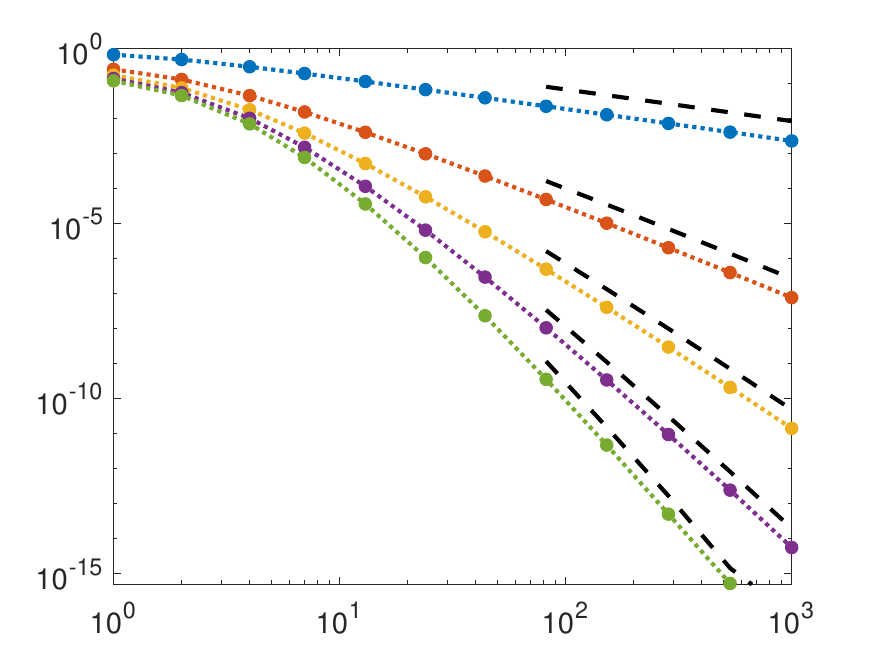}
        \put (48,1) {$\displaystyle N$}
        \put (20,72) {Error bounds (\Cref{thm:trap_rule_2_error})}
        \put (74,65) {\rotatebox{-8}{$m=2$}}
        \put (74,52) {\rotatebox{-23}{$m=4$}}
        \put (75,40) {\rotatebox{-34}{$m=6$}}
        \put (76,30) {\rotatebox{-42}{$m=8$}}
        \put (64,23) {\rotatebox{-50}{$m=10$}}
        \end{overpic}
    \end{minipage}
    \begin{minipage}{0.48\textwidth}
        \begin{overpic}[width=\textwidth]{figures/geometry_strongcont_sg.pdf}
        \put (48,1) {$\displaystyle  {\rm Re}\,z$}
        \put (2,36) {\rotatebox{90}{$\displaystyle {\rm Im}\,z$}}
        \put (68,42) {$\displaystyle \mathcal{O}(|z|^{-m})$}
        \put (75,50) {\tikz\draw[->,line width=1pt] (0,0) -- (0,0.75);}
        \put (75,24) {\tikz\draw[->,line width=1pt] (0,0) -- (0,-0.75);}
        \end{overpic}
    \end{minipage}
    \caption{\label{fig:mth_order_bounds} In the left-hand panel, the total error bounds (for $E_D+E_T$) from~\cref{thm:trap_rule_1_error} are plotted as a function of $N=$ `number of quadrature nodes' at $t=1$ with $\delta=2$ for $m=2,4,6,8$. The discretization parameter is optimized numerically to minimize the total error bound. The bounds are plotted with the normalizations $\|(2\delta-A)^mx\|=2^{m-2}$ to reflect typical graph norm growth of smooth functions, and compared with the asymptotic convergence rates $\mathcal{O}(1/N^{m-1})$ (black dashed lines). The right-hand panel shows the integrand's decay on the contour due to the regularizer $r(z)=(2\delta-z)^{-m}$. }
\end{figure}

The closed-form expressions for the error bounds in~\cref{thm:trap_rule_2_error} also allow us to select asymptotically optimal quadrature parameters to meet a uniform target accuracy $\epsilon>0$ over a time horizon $[0,T]$. First, requiring that the discretization error is less than $\epsilon/2$ at time $T$ and solving for the quadrature spacing, $h$, yields
\begin{equation}\label{eqn:choose_h}
    h = \pi\delta\left[\log\left(1 + \frac{1}{\epsilon}\frac{e^{3\delta t/ 2}}{\delta^m}\left(\frac{2^{m+1}\Gamma\left(\frac{3}{2}\right)\Gamma\left(\frac{m-1}{2}\right)}{\pi\Gamma\left(\frac{m}{2}\right)}\right)\|(2\delta-A)^mx\|\right)\right]^{-1}.
\end{equation}
Second, requiring that the bound for the truncation error also be on the order of $\epsilon/2$ and using the leading-order asymptotic in~\cref{eqn:asymptotic_truncation_bound} to solve for $N$, we obtain that
\begin{equation}\label{eqn:choose_n}
    N = \left\lceil\frac{1}{h}\left[\frac{1}{\epsilon}\frac{e^{\delta t}}{\pi\delta}\frac{\|(2\delta-A)^mx\|}{m-1}\right]^{1/(m-1)}\right\rceil.
\end{equation}
Since the discretization and truncation error are both montonically increasing on $[0,T]$, the error bounds at time $T$ are also valid for the entire time-window ending at time $T$.

\begin{figure}
    \centering
    \begin{minipage}{0.48\textwidth}
        \begin{overpic}[width=\textwidth]{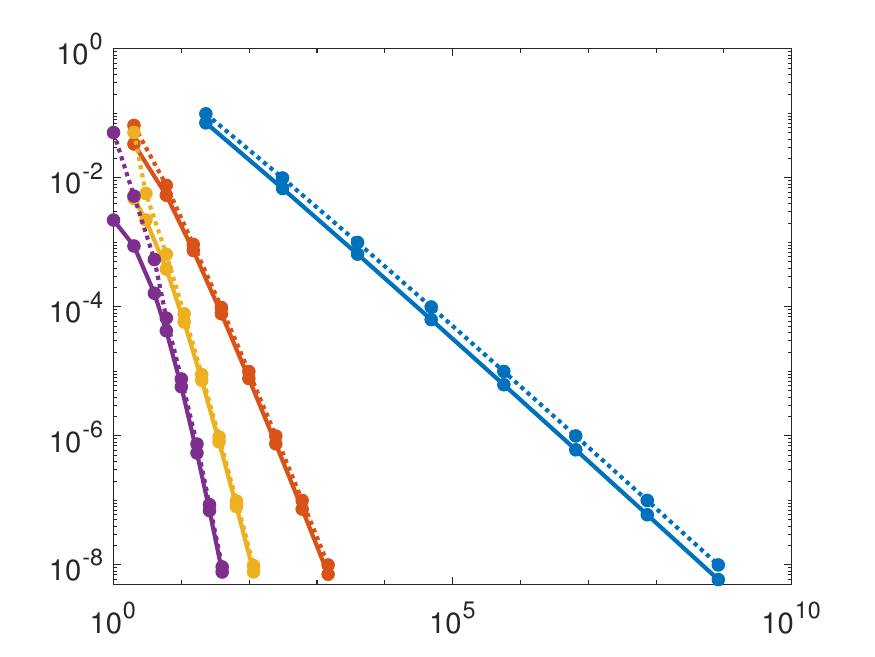}
        \put (42,1) {$\displaystyle N$}
        \put (20,72) {Error bounds (\Cref{thm:trap_rule_2_error})}
        \put (72,22) {\rotatebox{-42}{$m=2$}}
        \put (34,24) {\rotatebox{-66}{$m=4$}}
        \put (27,24) {\rotatebox{-74}{$m=6$}}
        \put (17,24) {\rotatebox{-80}{$m=8$}}
        \end{overpic}
    \end{minipage}
    \begin{minipage}{0.48\textwidth}
        \begin{overpic}[width=\textwidth]{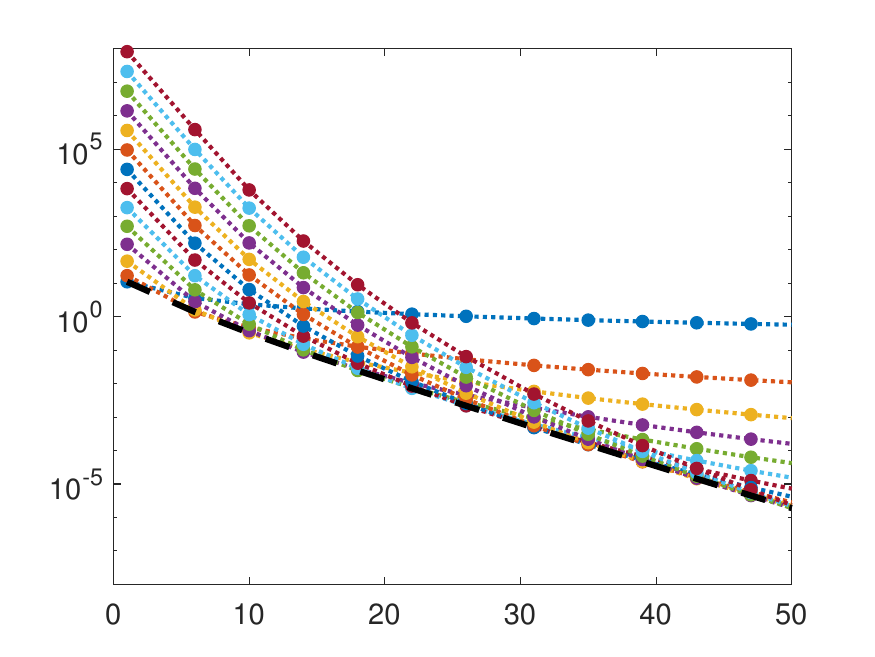}
        \put (48,1) {$\displaystyle N$}
        \put (20,72) {Error bounds (\Cref{thm:trap_rule_2_error})}
        \put (48,22) {\rotatebox{-16}{$\mathcal{O}(\exp(\alpha N))$}}
        \end{overpic}
    \end{minipage}
    \caption{\label{fig:parameter_envelope} The left panel compares the total error bounds (for $E_D+E_T$) from~\cref{thm:trap_rule_1_error}, plotted as a function of $N=$ `number of quadrature nodes' at $t=1$ with $\sigma=\delta=2$ for $m=2,4,6,8$. The discretization parameters $N$ and $h$ are chosen according to~\cref{eqn:choose_h,eqn:choose_n} for target error tolerances of $\epsilon=10^{-1},10^{-2},\ldots,10^{-8}$ (dotted lines) and compared with the error bound for optimal $h$ at the same values of $N$ (solid lines). For $N\geq 10$, the two are nearly indistinguishable, indicating that~\cref{eqn:choose_h,eqn:choose_n} are  nearly optimal. All bounds are normalized with $\|(2\delta-A)^mx\|=2^{m-2}$. The right panel demonstrates exponential convergence for functions in $D(A^\infty)$ with norm $\|(2\delta-A)^mx\|\leq (2\delta)^{m-2}$, reflected in the linear envelope (black dashes) of the $m$th order schemes on the semilog plot.}
\end{figure}

\section{Practical considerations for inexact samples}\label{sec:practice}

In practice, the quadrature error is not the only source of error in the scheme because the resolvent samples $R_A(z_k)$, for $k=-N,\ldots,N$, must be approximated. Moreover, the cost of approximating $R_A(z)$ to a given accuracy may vary widely with the location of $z$ in the right half-plane. This section provides a few practical tools to handle these approximation errors and understand their influence on the algorithm's performance.

Our first tool provides a computable \textit{a posteriori} error bound for the resolvent samples. The key ingredients are a computed residual together with the resolvent bound for strongly continuous semigroups in~\cref{thm:generation} (b). Suppose that $\tilde u_k\in D(A)$ is an approximate solution to the equation $(z_k-A)u_k=x$ and denote the residual by
$$
r_k = (z_k-A)\tilde u_k - x.
$$
The error between the computed solution $\tilde u_k$ and the true solution $u_k$ satisfies
$$
\tilde u_k - u_k = (z_k-A)^{-1}(r_k+x) - u_k = (z_k-A)^{-1}r_k.
$$
Since the quadrature nodes lie on the line ${\rm Re}\,z=\delta>0$ and item (b) of~\cref{thm:generation} limits the amplification power of the resolvent, we conclude that the error satisfies
\begin{equation}\label{eqn:residual_bound}
\|\tilde u_k - u_k\| \leq \frac{\|r_k\|}{\delta}, \qquad \text{for} \qquad k=-N,\ldots,N.
\end{equation}
As long as one can compute (or estimate) the norm of the residuals at each quadrature node, the resolvent bound immediately converts the residual norm to an error bound (or estimate) for the computed solution. For many practical problems the action of $A$ is known exactly on a finite-dimensional subspace of $D(A)$ containing the computed solution $\tilde u_k$ and the residual $r_k$ can be computed exactly or estimated reliably.

The error bound in~\cref{eqn:residual_bound} indicates that the actual error may be much larger than the residual when the contour is too close to the imaginary axis. This is a natural reflection of the possibility of ill-conditioning near the spectrum of $A$ in the left half-plane. However, \cref{eqn:residual_bound} guarantees that this effect is mild unless the contour is placed within a few decimal places from the imaginary axis. On the other hand, we often observe that the cost of achieving a small residual often increases rapidly as the contour is moved toward the imaginary axis (see the left panel of~\cref{fig:contour_choice2}). We offer the following interpretation in light of~\cref{eqn:lap_transform} and the final value theorem for Laplace transforms,
$$
\lim_{t\uparrow \infty} K(t)x = \lim_{s\downarrow 0} sR_A(s)x, \qquad x\in\mathcal{X}.
$$
In other words, the behavior of $R_A(z)$ at the origin captures the large-time asymptotic behavior of the semigroup $K(t)x$. While $K(t)x$ maintains the smoothness of the initial vector, e.g., $x\in D(A^m)$ implies $K(t)x\in D(A^m)$, many interesting classes of strongly continuous semigroups approach vectors in $\mathcal{X}$ with singular features (not in $D(A^m)$) as $t\rightarrow\infty$ (c.f. Example~3 in~\cref{sec:num_exp}). When the contour is near the origin, these singular features may appear in the resolvent samples and often require more computational degrees of freedom to resolve accurately (see the right panel of~\cref{fig:contour_choice2}).

Although placing the contour close to the imaginary axis can incur a computational cost in the resolvent samples, there are also factors that prevent us from placing the contour too far from the imaginary axis. For example, the error bounds in~\cref{thm:trap_rule_2_error} grow exponentially with $\delta$. In some numerical experiments, we have also observed the sum in the quadrature approximation of~\cref{eqn:quad_approx_mth-order} become ill-conditioned at large $\delta$.

\begin{figure}
    \centering
    \begin{minipage}{0.48\textwidth}
        \begin{overpic}[width=\textwidth]{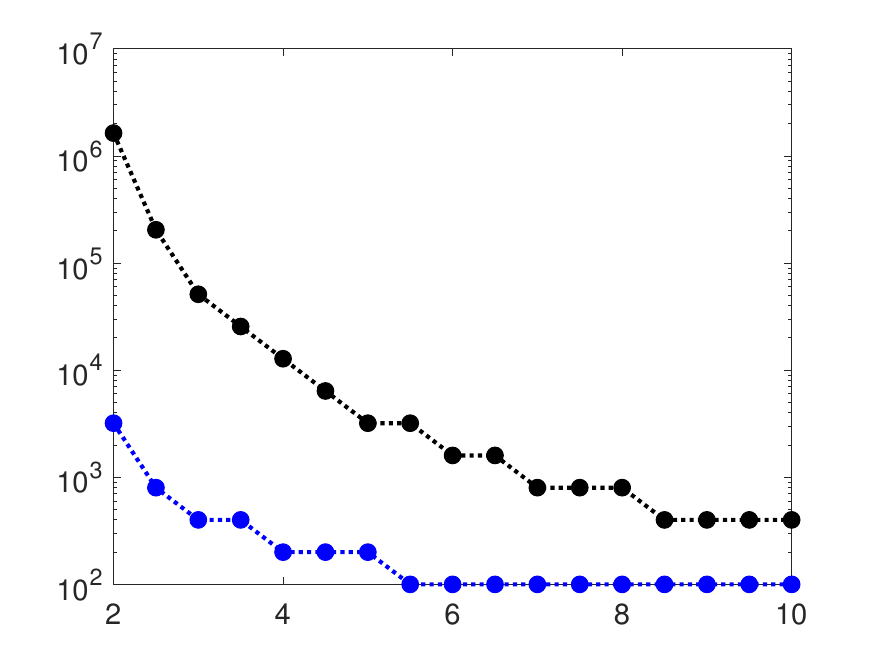}
        \put (48,-1) {$\displaystyle \delta$}
        \put (46,73) {$\displaystyle n(\epsilon')$}
        \end{overpic}
    \end{minipage}
    \begin{minipage}{0.48\textwidth}
        \begin{overpic}[width=\textwidth]{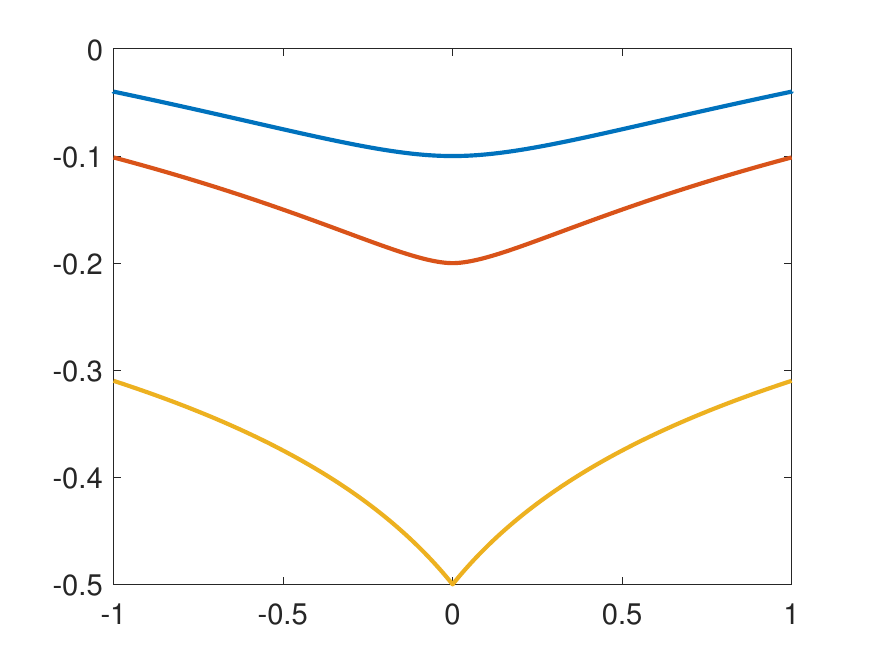}
        \put (48,-1) {$\displaystyle y$}
        \put (40,72) {$\displaystyle [R_A(z)x](y)$}
        \end{overpic}
    \end{minipage}
    \caption{\label{fig:contour_choice2} The discretization size, $n(\epsilon')$, needed to approximate $R_A(z)x$, for Example~3 in~\cref{sec:num_exp}, to within relative error $\epsilon' = 10^{-4}$ (blue) and $\epsilon'=10^{-8}$ (black) is plotted against the contour location $\delta>0$ in the left panel. In the right panel, the resolvent requires higher degree polynomial approximations (i.e.,  larger spectral discretizations) with the decreasing values of $\delta=10$ (blue), $5$ (red), and $2$ (yellow) because $R_A(z)x$ develops a cusp at the point $y=0$.}
\end{figure}

\section{Derivation of explicit error bounds}\label{sec:error_bounds}

Let $I(t)$ denote the exact value of the contour integral in~\eqref{eqn:reg_op_exp} after parametrization,
\begin{equation}\label{eqn:param_integral}
I(t) = \left[\frac{1}{2\pi}\int_{-\infty}^{\infty} r(\delta+is) e^{t(\delta+is)} R_A(\delta+is)\,ds\right] r(A)^{-1}x, \qquad t\geq 0. 
\end{equation}
We derive bounds on the approximation error in~\cref{eqn:quad_approx} in two steps. In the first step, we bound the discretization error caused by approximating the integral in~\cref{eqn:param_integral} by
\begin{equation}\label{eqn:inf_sum}
    S_h(t;\delta) = \left[\frac{h}{2\pi}\sum_{k=-\infty}^\infty r(\delta+ihk)e^{t(\delta+ihk)}R_A(\delta+ihk)\right]r(A)^{-1}x.
\end{equation}
In the second step, we bound the truncation error, that is, the error made by truncating the series in~\cref{eqn:inf_sum} to the finite sum on the right-hand side of~\cref{eqn:quad_approx}, which we denote here by $S_h^N(t;\delta)$. The total error in the approximation of $I(t)$ is bounded by
\begin{equation}\label{eqn:approx_error}
|I(t)-S_h^N(t;\delta)|\leq |I(t)-S_h(t;\delta)| + |S_h(t;\delta)-S_h^N(t;\delta)|.
\end{equation}
The first left-hand term is the \textit{discretization error}. The second is the \textit{truncation error}. After computing bounds for both error contributions for a general $r(z)$, we work out the key terms in detail to derive the explicit, interpretable bounds in~\cref{thm:trap_rule_1_error,thm:trap_rule_2_error}.

\subsection{Discretization error}\label{sec:disc_error}

The discretization error can be bounded above by applying a simple Banach space extension (see~\cref{app:quad_banach}) of error bounds for the trapezoidal rule~\cite[Thm.~5.1]{trefethen2014exponentially}.
\begin{theorem}\label{thm:trap_rule_error}
    Suppose $f:\mathbb{C}\rightarrow\mathcal{X}$ is analytic in the strip $|{\rm Im}\,z|<a$ for some $a>0$, that $f(z)\rightarrow 0$ uniformly as $|z|\rightarrow\infty$ in the strip, and for some $C_f>0$, it satisfies
    \begin{equation}\label{eqn:uniform_int_bound}
        \int_{-\infty}^\infty \|f(s+ib)\|\,ds\leq C_f, \qquad\text{for all}\qquad -a < b < a.
    \end{equation}
    If, for some $h>0$, the sum $h\sum_{k=-\infty}^\infty f(hk)$ converges absolutely, then it satisfies
    \begin{equation}\label{eqn:trap_rule_bound}
        \left\|\int_{\infty}^\infty f(s)\,ds - h\sum_{k=-\infty}^\infty f(hk)\right\|\leq \frac{2C_f}{e^{2\pi a/h}-1}.
    \end{equation}
    Moreover, the constant $2C_f$ in the numerator is sharp, i.e., as small as possible.
\end{theorem}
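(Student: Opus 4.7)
The plan is to reduce the Banach-space statement to the classical scalar trapezoidal-rule bound (Trefethen-Weideman, Thm.~5.1) via a Hahn-Banach duality argument, then extract sharpness from the scalar counterexample. The key observation is that every hypothesis is linear in $f$ and controlled by $\|f\|$, so composition with a unit-norm functional preserves the structure.

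First I would fix an arbitrary $\ell\in\mathcal{X}^*$ with $\|\ell\|=1$ and set $g(z)=\ell(f(z))$. Analyticity of $g$ on the strip $|{\rm Im}\,z|<a$ follows from the analyticity of $f$ and the continuity of $\ell$ (e.g., by differentiating the difference quotient inside $\ell$). The uniform decay $g(z)\to 0$ as $|z|\to\infty$ in the strip is immediate from $|g(z)|\leq\|f(z)\|$, and the same inequality gives the uniform integrability bound $\int_{-\infty}^\infty |g(s+ib)|\,ds\leq C_f$ for all $-a<b<a$.

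Next I would invoke the scalar trapezoidal-rule theorem applied to $g$, which produces
\begin{equation*}
\left|\int_{-\infty}^\infty g(s)\,ds - h\sum_{k=-\infty}^\infty g(hk)\right| \leq \frac{2C_f}{e^{2\pi a/h}-1}.
\end{equation*}
Because Bochner integration and absolutely convergent Banach-space series commute with bounded linear functionals, the left-hand side equals $|\ell(I-S)|$ where $I=\int_{-\infty}^\infty f(s)\,ds$ and $S=h\sum_{k\in\mathbb{Z}} f(hk)$. Taking the supremum over all $\ell$ in the unit ball of $\mathcal{X}^*$ and using the Hahn-Banach characterization $\|v\|=\sup_{\|\ell\|\leq 1}|\ell(v)|$ for $v\in\mathcal{X}$ yields the desired bound~\eqref{eqn:trap_rule_bound}. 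A minor preliminary step is to verify, using the dominated convergence theorem and the hypothesis $\int\|f(s)\|\,ds\leq C_f$ together with absolute convergence of $h\sum\|f(hk)\|$, that $I$ and $S$ genuinely lie in $\mathcal{X}$, so that the above identifications are well-defined.

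For sharpness, it suffices to note that the scalar case is already sharp (classical examples use $f(z) = \mathrm{sech}^2(\pi z/(2a))$ or its variants in Trefethen-Weideman); embedding such a scalar $f$ into $\mathcal{X}$ through any fixed unit vector $v\in\mathcal{X}$ produces a Banach-valued integrand $\tilde f(z)=f(z)v$ with $\|\tilde f(z)\|=|f(z)|$, for which the Banach-space constant $C_{\tilde f}$ equals $C_f$ and the error attains the scalar bound along the one-dimensional subspace $\mathrm{span}(v)$. I do not anticipate a real obstacle: the only delicate point is interchange of $\ell$ with the integral and the infinite sum, and this is standard once absolute convergence is in hand. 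The heavy analytic lifting is all done inside the scalar reference theorem.
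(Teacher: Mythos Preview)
Your proposal is correct and follows essentially the same duality argument as the paper: both reduce to the scalar trapezoidal-rule theorem by pairing with an arbitrary unit functional, invoke Bochner-integrability to justify the interchange of the functional with the integral and the absolutely convergent sum, apply the scalar bound, and then take the supremum over the dual unit ball. Your treatment of sharpness via embedding a scalar near-extremizer along a fixed unit vector is slightly more explicit than the paper's, but the overall structure is the same.
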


The integrand in~\eqref{eqn:param_integral} is analytic and uniformly integrable in the strip ${\rm |Im}\,z|<\delta/2$ because of the resolvent bound in~\cref{thm:generation} (b) and the decay condition on $r(z)$. Furthermore, we can calculate a suitable bound $C_f$ for~\cref{thm:trap_rule_error} explicitly.

\begin{lemma}\label{thm:uniform_int_bound}
     Suppose that $A:D(A)\rightarrow\mathcal{X}$ satisfies condition (a) in~\cref{thm:generation} with constants $M$ and $\omega=0$, that $x\in D(r(A)^{-1})$. Given contour location $\delta>0$ in~\cref{eqn:param_integral}, suppose also that $r(z)$ is analytic in the left half-plane $|{\rm Im}\,z|<2\delta$. Then $I(t)$ satisfies the uniform absolute integrability estimate in~\cref{eqn:uniform_int_bound} with $a=\delta/2$ and
     $$
     C_f = \frac{2M}{\delta}\frac{e^{3\delta t/2}}{2\pi}\max_{|b-\delta|<\delta/2}\left[\int_{-\infty}^\infty |r(\delta-b+is)|\,ds\right]\|r(A)^{-1}x\|.
     $$
\end{lemma}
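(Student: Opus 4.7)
The plan is to verify the hypotheses of Theorem~\ref{thm:trap_rule_error} directly for the $\mathcal{X}$-valued integrand
$$
f(s) \;=\; \frac{1}{2\pi}\, r(\delta+is)\, e^{t(\delta+is)}\, R_A(\delta+is)\, r(A)^{-1}x,
$$
so that $I(t) = \int_{-\infty}^{\infty} f(s)\,ds$, and then to track the constants carefully. First I would check that $f$ is analytic as an $\mathcal{X}$-valued function on the strip $|{\rm Im}\,s| < \delta/2$. This follows because $z \mapsto R_A(z)$ is analytic on the resolvent set $\rho(A)$, which contains the right half-plane ${\rm Re}\,z > 0$ by Theorem~\ref{thm:generation}(b); because $r$ is analytic in the relevant strip around $z=\delta$ by hypothesis; because $e^{tz}$ is entire; and because $r(A)^{-1}x \in \mathcal{X}$ since $x \in D(r(A)^{-1})$. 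Uniform decay of $f(s+ib) \to 0$ as $|s| \to \infty$ comes from the required decay $|r(z)| = \mathcal{O}(|z|^{-1-\sigma})$ in the functional calculus, together with boundedness of $R_A$ on shifted contours.

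Next I would bound $\|f(s+ib)\|$ for $b \in (-\delta/2,\delta/2)$. The substitution gives $\delta + i(s+ib) = (\delta-b) + is$, whose real part satisfies $\delta/2 < \delta - b < 3\delta/2$. Theorem~\ref{thm:generation}(b) with $n=1$ and $\omega=0$ therefore yields
$$
\lVert R_A((\delta-b)+is)\rVert \;\leq\; \frac{M}{\delta-b} \;\leq\; \frac{2M}{\delta},
$$
while the exponential factor is controlled by $|e^{t((\delta-b)+is)}| = e^{t(\delta-b)} \leq e^{3\delta t/2}$. Multiplying these pointwise bounds with the scalar modulus $|r(\delta-b+is)|$ and the fixed factor $\|r(A)^{-1}x\|$ gives
$$
\lVert f(s+ib)\rVert \;\leq\; \frac{1}{2\pi}\,\frac{2M}{\delta}\, e^{3\delta t/2}\, |r(\delta-b+is)|\,\lVert r(A)^{-1}x\rVert.
$$

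Integrating in $s$, pulling the $s$-independent constants out, and taking the supremum over the admissible shifts (reparametrized so that the range $b \in (-\delta/2,\delta/2)$ appears as $|b - \delta| < \delta/2$ in the statement) produces exactly the claimed constant $C_f$. Absolute convergence of the resulting integral is inherited from the decay of $r$ along vertical lines in its strip of analyticity. Since all of the steps are strictly pointwise inequalities followed by Bochner integration in $s$, no subtle interchange arguments are needed.

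The only real obstacle is the $\mathcal{X}$-valued analyticity and uniform integrability needed to invoke Theorem~\ref{thm:trap_rule_error}; once those are established, assembling the stated $C_f$ is a direct computation combining the resolvent bound, the growth of $e^{tz}$ on the shifted contour, and the scalar integrability of $r$. I expect this to be the main place where care is required, since it rests on the strong-operator analyticity of $z \mapsto R_A(z)$ and on the decay condition from Section~\ref{sec:func_calc} rather than on anything delivered by the proof itself.
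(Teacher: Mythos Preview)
Your proposal is correct and follows essentially the same approach as the paper: define the Banach-valued integrand, shift to $s+ib$, apply the resolvent bound from Theorem~\ref{thm:generation}(b) together with the exponential estimate $|e^{t(\delta-b+is)}|\le e^{3\delta t/2}$, and maximize each factor over the strip. Your version is slightly more explicit about verifying analyticity and decay before invoking Theorem~\ref{thm:trap_rule_error}, but the substance is the same.
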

\begin{proof}
     Denote the integrand of~\cref{eqn:param_integral} by $f(s;\delta,t)=r(\delta+is) e^{t(\delta+is)} R_A(\delta+is) r(A)^{-1}x$. After shifting the contour within the strip $\delta-\delta/2 < {\rm Re}\,z <\delta+\delta/2$, we take the norm of the integrand in~\cref{eqn:param_integral} and apply the resolvent bound in~\cref{thm:generation} (b) to obtain
    \begin{align*}
    \frac{1}{2\pi}\int_{-\infty}^{\infty}\| f(s+ib)\|\,ds 
    \leq \frac{M}{(\delta-b)}\frac{e^{t(\delta-b)}}{2\pi}\left[\int_{-\infty}^\infty |r(\delta-b+is)|\,ds\right]\|r(A)^{-1}x\|
    \end{align*}
    Taking the maximum of each factor over the strip $\delta/2<b<3\delta/2$ establishes $C_f$.
\end{proof}

The bounds on the discretization error in~\cref{thm:trap_rule_1_error,thm:trap_rule_2_error} follow directly from~\cref{thm:trap_rule_error} and~\cref{thm:uniform_int_bound} after bounding the max modulus integral in brackets in~\cref{thm:uniform_int_bound} for the regularizers $r(z)=(\delta+a-z)^{-2}$ and $r(z)=(2\delta-z)^{-m}$, respectively.
\begin{proof}[Proof of discretization error in~\cref{thm:trap_rule_2_error}]
    With $r(\delta-b+is) = (\delta-b+is)^{-m}$, so that $|r(\delta-b+is)|=((\delta-b)^2+s^2)^{-m/2}$, we first compute the indefinite integral associated with~\cref{thm:uniform_int_bound} using a hypergeometric identity,
    \begin{equation}\label{eqn:hypo_geo_id}
    \int \frac{ds}{\left((\delta-b)^2+s^2\right)^{m/2}} = \frac{s}{(\delta-b)^m}{}_2F_1\left(\frac{1}{2},\frac{m}{2};\frac{3}{2};-\frac{s^2}{(\delta-b)^2}\right).
    \end{equation}
    Exploiting the symmetry of the integrand over the real axis and using the hypergeometric identity to evaluate the improper integral, we obtain
    $$
    \lim_{N\rightarrow\infty}\int_{-N}^N \frac{ds}{\left((\delta-b)^2+s^2\right)^{m/2}} = \frac{2}{(\delta-b)^{m-1}} \lim_{N\rightarrow\infty} \left[\frac{N}{(\delta-b)}{}_2F_1\left(\frac{1}{2},\frac{m}{2};\frac{3}{2};-\frac{N^2}{(\delta-b)^2}\right)\right].
    $$
    Applying the asymptotic limit derived in~\cref{app:hypogeo_func}, we find that
    $$
    \int_{-\infty}^\infty \frac{ds}{\left((\delta-b)^2+s^2\right)^{m/2}} = \frac{2\Gamma\left(\frac{3}{2}\right)\Gamma\left(\frac{m-1}{2}\right)}{(\delta-b)^{m-1}\Gamma\left(\frac{m}{2}\right)}.
    $$
    The right-hand side achieves its maximum over $\delta/2<b<3\delta/2$ at the left-hand side of the interval. Evaluating there means that $\delta-b=\delta/2$ and substituting the result into the constant $C_f$ in~\cref{thm:uniform_int_bound} establishes the discretization error bound in~\cref{thm:trap_rule_1_error}.
\end{proof}

The derivation of the discretization error in~\cref{thm:trap_rule_1_error} is completely analogous, except that the strip of analyticity is taken with width $\sigma = \min\{a,\delta\}$.

\subsection{Truncation error}\label{sec:trunc_error}

Bounding the truncation error requires bounding the norms of the neglected tails when truncating the infinite series in~\cref{eqn:inf_sum}. Taking the norm of each term in the tail yields
$$
\|S_h(t;\delta)-S_{h,N}(t;\delta)\|\leq \frac{he^{\delta t}}{2\pi} \left[\sum_{|k|>N} |r(\delta+ihk)|\|R_A(\delta+ihk)\|\right]\|r(A)^{-1}x\|
$$
Applying the resolvent bound for semigroups in~\cref{thm:generation} (b), we find that
\begin{equation}\label{eqn:tail_bound}
\|S_h(t;\delta)-S_{h,N}(t;\delta)\|\leq \frac{Mhe^{\delta t}}{2\pi\delta} \left[\sum_{|k|>N} |r(\delta+ihk)|\right]\|r(A)^{-1}x\|
\end{equation}
Therefore, to bound the truncation error in the quadrature schemes of~\cref{sec:example_analysis,sec:high_order_scheme}, we need to bound the tail sum of the sampled regularizer.

\begin{proof}[Proof of truncation error in~\cref{thm:trap_rule_2_error}]
    To bound the tail sum, notice that $|r(\delta+is)|=(\delta^2+s^2)^{-m/2}$ decreases monotonically as $s$ increases. Therefore, we can bound the sum by integrating and, applying the hypergeometric identity in~\cref{eqn:hypo_geo_id}, we have
    \begin{align}\label{eqn:high_order_tail_bound}    
    &h\sum_{|k|>N} |r(\delta+ihk)| \leq 2\int_{hN}^\infty \frac{dx}{(\delta^2+s^2)^{m/2}} \\
    &= \frac{2}{\delta^{m-1}}\left[\frac{\Gamma\left(\frac{3}{2}\right)\Gamma\left(\frac{m-1}{2}\right)}{\Gamma\left(\frac{m}{2}\right)} - \frac{hN}{\delta}{}_2F_1\left(\frac{1}{2},\frac{m}{2};\frac{3}{2};-\left(\frac{hN}{\delta}\right)^2\right)\right].
    \end{align}
    Substituting into~\cref{eqn:tail_bound} establishes the truncation error bound in~\cref{thm:trap_rule_2_error}.
\end{proof}

Again, the truncation error in~\cref{thm:trap_rule_1_error} is derived in a completely analogous manner. The end result is slightly simpler due to the simplification of the constants and the reduction of the hypergeometric function with an $\arctan$ identity when $m=2$.

\section{Numerical experiments}\label{sec:num_exp}

In this section, we illustrate the computational framework with generators of the form
$$
A = F(x)\cdot \nabla_x, \qquad x\in \Omega\subset \mathbb{R}^d,
$$
where $F(x)$ is a Lipshitz continuous map $F:\mathbb{R}^d\rightarrow\mathbb{R}^d$ and $\Omega$ is a simply connected subset of $\mathbb{R}^d$. With suitable boundary conditions~\cite{ulmet1992properties}, $A$ generates a contraction semigroup on $C(\Omega)$ with the the usual supremum norm $\|g\|=\sup_{x\in\Omega}|g(x)|$, meaning that $A$ satisfies the hypotheses of~\cref{thm:generation} (a) with $M=1$ and $\omega=0$. Such generators may be highly non-normal and exhibit hallmark infinite-dimensional spectral properties of strongly continuous semigroups, such as point spectrum filling the entire left half-plane. On the other hand, they are useful for numerical experiments because analytic solutions may sometimes be obtained, e.g., through the method of characteristics. Moreover, they are intimately connected with Koopman operator theory for continuous-time dynamical systems and one can understand $K(t)g$ as the pullback of an observable $g\in C(\Omega)$ under the flow of the dynamical system $\dot x= F(x)$ on $\Omega$~\cite{budivsic2012applied}. 

\textbf{Example 1: Linear system with stable fixed point.} First, consider the linear system $\dot x = -x$ with $x\in\Omega=[-1,1]$. The analytic form of the flow is $\phi(x,t)=xe^{-t}$ and the pullback of an observable is given by the semigroup $[K(t)g](x)=g(xe^{-t})$ with generator $[Ag](x) = -xg'(x)$. No boundary conditions are required because $F(x)=-x$ points inward at the boundary points $x=\pm 1$~\cite{ulmet1992properties}. \Cref{fig:example_1} displays the output of the algorithm outlined in~\cref{sec:high_order_scheme} at times $t=0.2,0.4,0.6,0.8,1.0$ in the left-hand panel with order $m=6$, contour location $\delta=2$, and $N=80$ quadrature nodes. The relative errors in the computed approximations of $K(t)g$ are compared with the error bounds from~\cref{thm:trap_rule_2_error} for $0\leq t\leq 1$ in the middle panel and the numerical convergence of the scheme for $m=2,4,6,8$ with $N$ is demonstrated in the right panel. The test-case observable is $g(x) = \sin(\pi x)(1-x^2)$ and~\cref{fig:example_1} (left) shows the expected flattening of the observable around the stable fixed point of the dynamical system as $t$ increases.

\begin{figure}
    \centering
    \begin{minipage}{0.31\textwidth}
        \begin{overpic}[width=\textwidth]{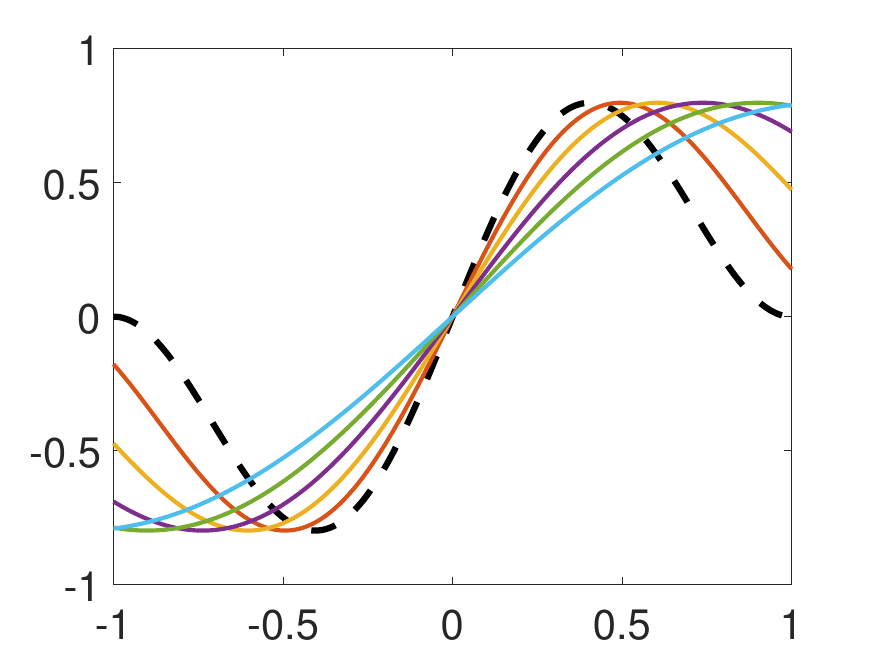}
        \put (44,-1) {$\displaystyle x$}
        \put (36,71) {$\displaystyle \hat K^{(6)}(t)g(x)$}
        \end{overpic}
    \end{minipage}
    \begin{minipage}{0.31\textwidth}
        \begin{overpic}[width=\textwidth]{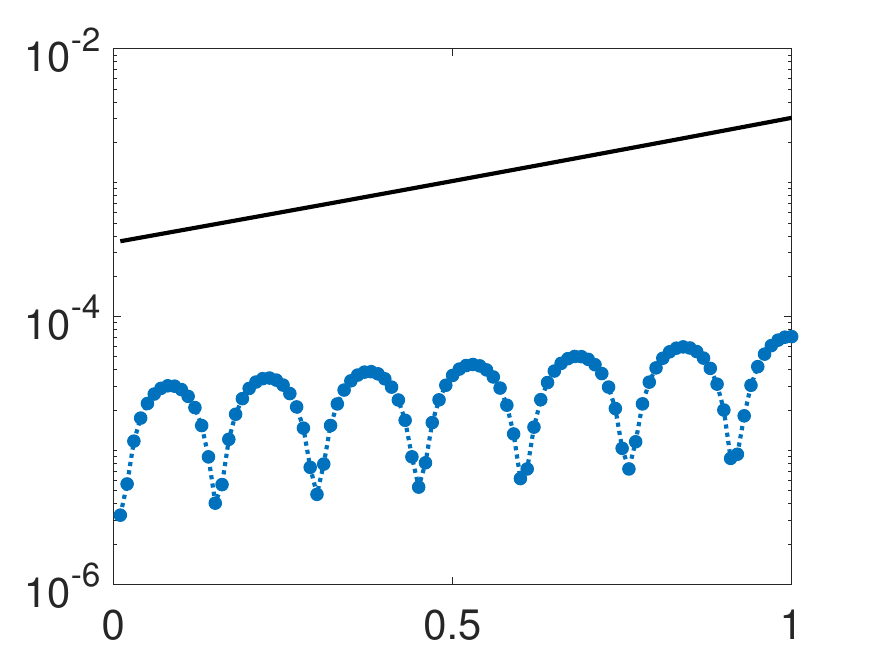}
        \put (44,-1) {$\displaystyle t$}
        \put (14,72) {$\displaystyle \|K(t)g-\hat K^{(6)}(t)g\|$}
        \end{overpic}
    \end{minipage}
    \begin{minipage}{0.31\textwidth}
        \begin{overpic}[width=\textwidth]{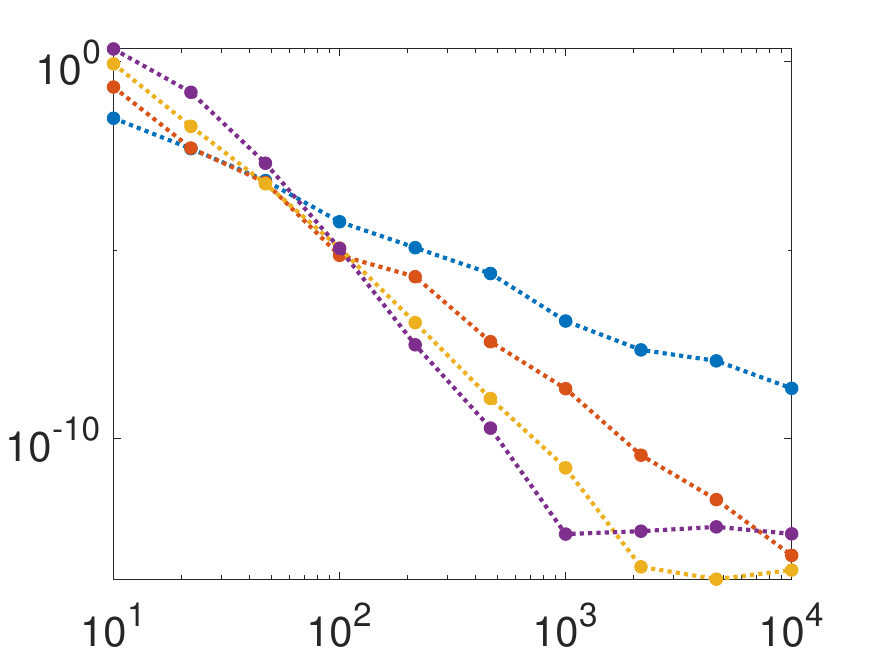}
        \put (48,-1) {$\displaystyle N$}
        \put (14,72) {$\displaystyle \|K(1)g-\hat K^{(m)}(1)g\|$}
        \end{overpic}
    \end{minipage}
    \caption{\label{fig:example_1} \textbf{Example 1.} In the left-hand panel, the computed functions $\hat K^{(6)}(t) g$ with observable $g(x)=\sin(\pi x)(1-x^2)$ (dashed black line) is shown at $t=0.2$ (red), $0.4$ (yellow), $0.6$ (purple), $0.8$ (green), and $1.0$ (cyan). The middle panel compares the computed error (dashed line) with the error bounds from~\cref{thm:trap_rule_2_error} for $0\leq t\leq 1$, $\delta=2$, $N=80$, and $h$ chosen to minimize the error bound at $t=1$. In the right-hand panel, the error in the approximation $\hat K^{(m)}(1)$ at $t=1$ is plotted against $N$ for $m=2$ (blue), $4$ (red), $6$ (yellow), $8$ (purple).}
\end{figure}

\begin{figure}
    \centering
    \begin{minipage}{0.31\textwidth}
        \begin{overpic}[width=\textwidth]{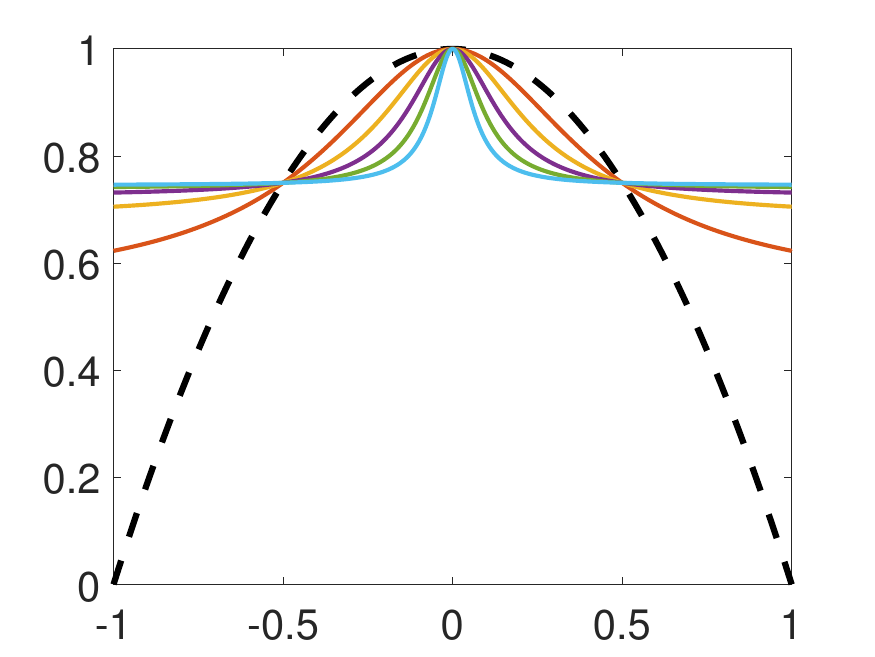}
        \put (44,-1) {$\displaystyle x$}
        \put (36,71) {$\displaystyle \hat K^{(6)}(t)g(x)$}
        \end{overpic}
    \end{minipage}
    \begin{minipage}{0.31\textwidth}
        \begin{overpic}[width=\textwidth]{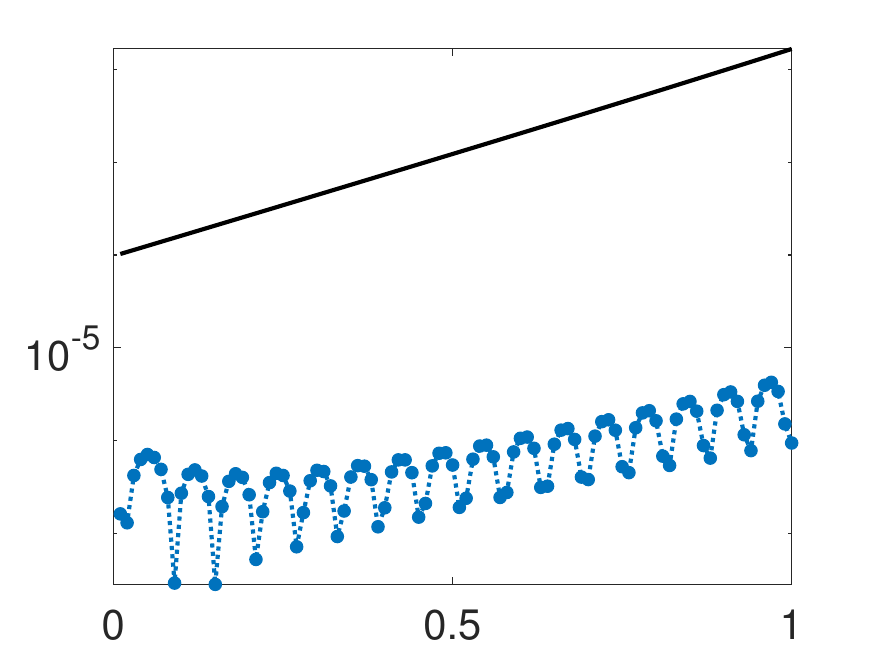}
        \put (44,-1) {$\displaystyle t$}
        \put (14,72) {$\displaystyle \|K(t)g-\hat K^{(6)}(t)g\|$}
        \end{overpic}
    \end{minipage}
    \begin{minipage}{0.31\textwidth}
        \begin{overpic}[width=\textwidth]{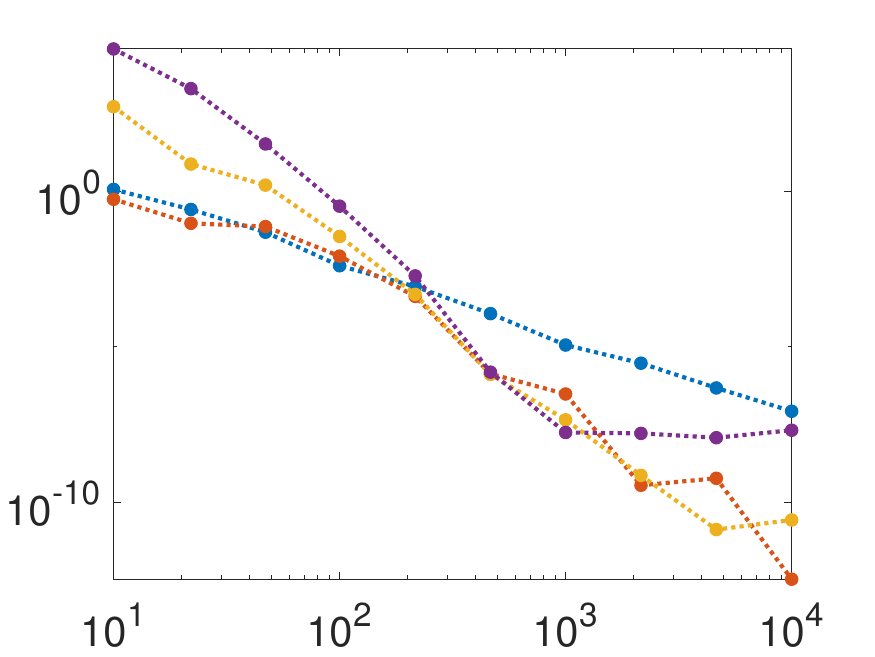}
        \put (48,-1) {$\displaystyle N$}
        \put (14,72) {$\displaystyle \|K(1)g-\hat K^{(m)}(1)g\|$}
        \end{overpic}
    \end{minipage}
    \caption{\label{fig:example_2} \textbf{Example 2.} In the left-hand panel, the computed functions $\hat K^{(6)}(t) g$ with observable $g(x)=\sin(\pi x)(1-x^2)$ (solid black line) is shown at $t=0.2$ (red), $0.4$ (yellow), $0.6$ (purple), $0.8$ (green), and $1.0$ (cyan). The middle panel compares the computed error (dashed line) with the error bounds from~\cref{thm:trap_rule_2_error} for $0\leq t\leq 1$, $\delta=5$, $N=500$, and $h$ chosen to minimize the error bound at $t=1$. In the right-hand panel, the error in the approximation $\hat K^{(m)}(t)g$ is plotted against $N$ for $m=2$ (blue), $4$ (red), $6$ (yellow), $8$ (purple).}
\end{figure}

\textbf{Example 2: Nonlinear system with three fixed points.} Next, consider the nonlinear system $\dot x = 2x-8x^3$ with $x\in\Omega=[-1,1]$. The analytic form of the flow is $\phi(x,t)=xe^{2t}/\sqrt{1-4x^2(1+e^{4t})}$ and the pullback of an observable is given by the semigroup $[K(t)g](x)=g(\phi(x,t))$ with generator $[Ag](x) = (2x-8x^3)g'(x)$. Again, no boundary conditions are required because $F(x)=2x-8x^3$ points inward at the boundary points $x=\pm 1$. \Cref{fig:example_2} displays the output of the algorithm outlined in~\cref{sec:high_order_scheme} at times $t=0.2,0.4,0.6,0.8,1.0$ in the left panel for $m=6$, $\delta=5$, and $N=500$. The relative errors in the computed approximations of $K(t)g$ are compared with the error bounds from~\cref{thm:trap_rule_2_error} for $0\leq t\leq 1$ in the middle panel and the numerical convergence of the scheme for $m=2,4,6,8$ with $N$ is demonstrated in the right panel. The test-case observable is $g(x) = 1-x^2$ and~\cref{fig:example_2} (left) shows the expected flattening of the observable around the stable fixed points at $x=\pm 1/2$ and localization around the unstable fixed point at $x=0$ as $t$ increases.

\textbf{Example 3: 2D linear system.} The third example is the linear $2$D coupled system of oscillators given by $\dot x = Bx$ with $B=[0 \quad 1 \,;\, -1\quad  0]$. The analytic form of the flow is $\phi(x,t)=\exp(Bt)x$ and the associated semigroup action is $[K(t)g](x)=g(\phi(x,t))$. The semigroup's generator is $[Ag](x) = x_2\partial_1 g(x) -x_1\partial_2 g(x)$ and we require that $g$ be compactly supported on $\mathbb{R}^2$~\cite[pp.~91-92]{engel2000one}. \Cref{fig:example_3} displays the output of the algorithm outlined in~\cref{sec:high_order_scheme} at times $t=0$ (left), $t=1$ (middle), and $t=2$ (right) for $m=10$, $\delta = 4$, and $N=194$. The test-case observable is $g(x) = \exp(-2x_1^2-0.5x_2^2)$ and~\cref{fig:example_3} illustrates the expected rotation of the observable based on the underlying rotation of the state space by the dynamical system $\dot x = Bx$. For computation, the resolvent was discretized with a second-order difference stencil on a $201\times 201$  equispaced grid and the maximum error on the grid at $t=2$ was $0.004$, compared with a computed error bound of $0.0079$ from~\cref{thm:trap_rule_2_error}. 

\begin{figure}
    \centering
    \begin{minipage}{0.31\textwidth}
        \begin{overpic}[width=\textwidth]{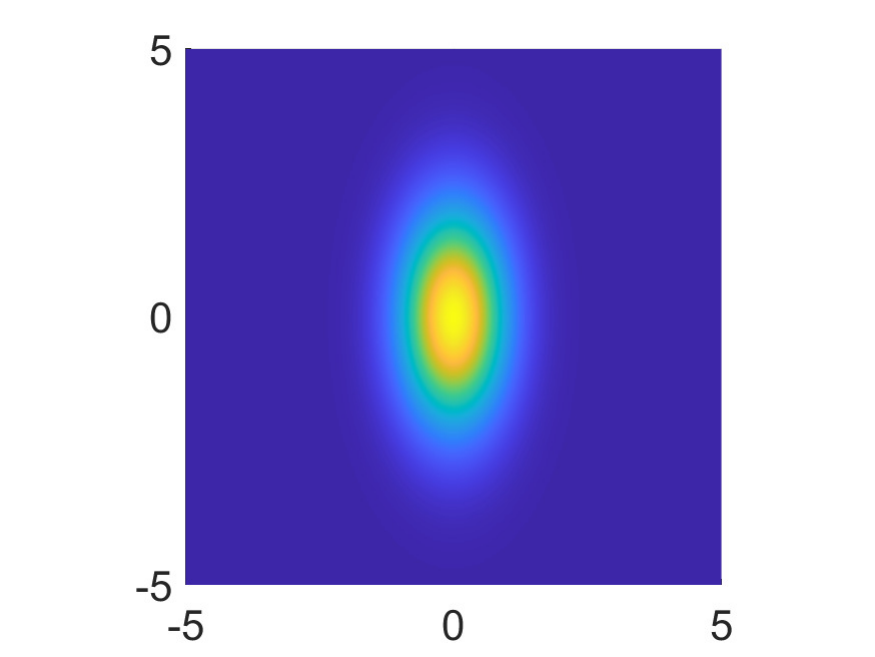}
        \put (44,-1) {$\displaystyle x$}
        \put (1,44) {$\displaystyle y$}
        \put (30,73) {$\displaystyle [\hat K(t)g](x,y)$}
        \end{overpic}
    \end{minipage}
    \begin{minipage}{0.31\textwidth}
        \begin{overpic}[width=\textwidth]{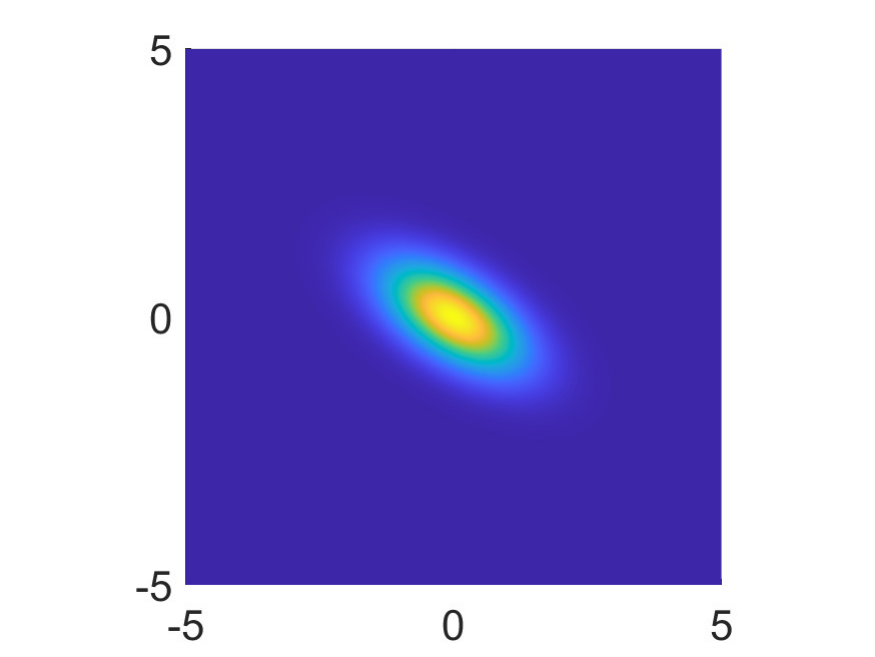}
        \put (44,-1) {$\displaystyle x$}
        \put (1,44) {$\displaystyle y$}
        \put (30,73) {$\displaystyle [\hat K(t)g](x,y)$}
        \end{overpic}
    \end{minipage}
    \begin{minipage}{0.31\textwidth}
        \begin{overpic}[width=\textwidth]{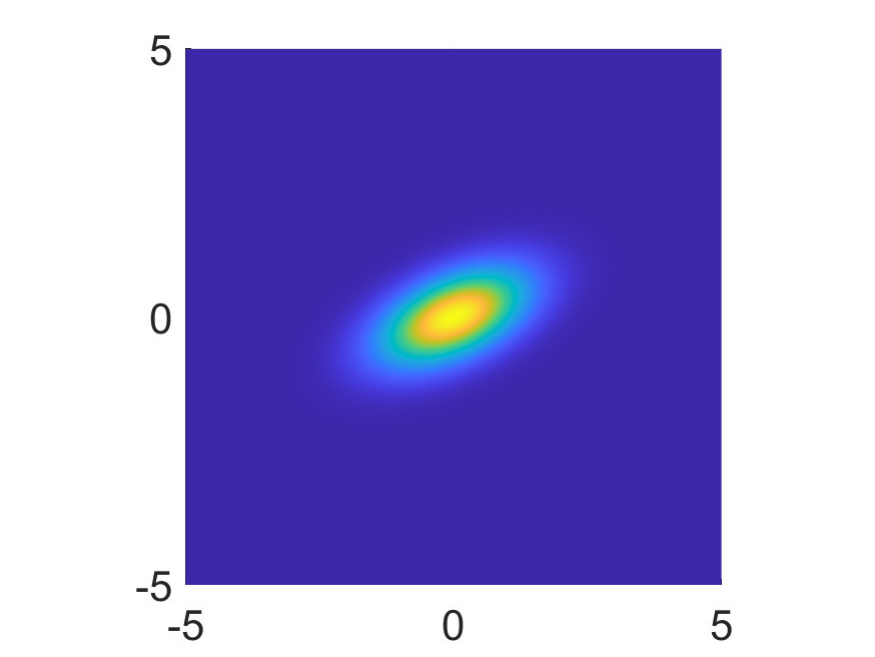}
        \put (44,-1) {$\displaystyle x$}
        \put (1,44) {$\displaystyle y$}
        \put (30,73) {$\displaystyle  [\hat K(t)g](x,y)$}
        \end{overpic}
    \end{minipage}
    \caption{\label{fig:example_3} \textbf{Example 3.} The evolution of $g(x)=\exp(-2x^2-0.5y^2)$ under the action of the semigroup in Example 3 is plotted at $t=0$ (left), $t=1$ (middle), and $t=2$ (right), demonstrating the expected counterclockwise rotation of the initial condition on the underlying state-space.}
\end{figure}

\textbf{Example 4: 2D nonlinear system.} The fourth and final example is the nonlinear separable $2$D system given by $\dot x = 2x-8x^3$ and $\dot y = 2y-8y^3$. The analytic form of the flow is a tensor product copy of the $1$D solution in Example 2 and the associated semigroup's action is $[K(t)g](x)=g(\phi(x,t))$ with generator $[Ag](x) = (2x-8x^3)\partial_x g(x,y) + (2y-8y^3)\partial_y g(x)$. \Cref{fig:example_4} displays the output of the algorithm outlined in~\cref{sec:high_order_scheme} at times $t=0$ (left), $t=0.1$ (middle), and $t=0.2$ (right) for $m=4$, $\delta=16$, and $N=97$. The test-case observable is $g(x,y) = \exp(-2x^2-0.5y^2)$ and~\cref{fig:example_4} shows the expected shrinking and stretching of the observable around the underlying fixed points of the separable dynamical system. For computation, the resolvent was discretized with a second-order difference stencil on a $251\times 251$  equispaced grid and the maximum error on the grid at $t=2$ was $0.0057$, compared with a computed error bound of $0.0076$ from~\cref{thm:trap_rule_2_error}. 

\begin{figure}
    \centering
    \begin{minipage}{0.31\textwidth}
        \begin{overpic}[width=\textwidth]{figures/example4_slice0.pdf}
        \put (44,-1) {$\displaystyle x$}
        \put (1,44) {$\displaystyle y$}
        \put (30,73) {$\displaystyle [\hat K(t)g](x,y)$}
        \end{overpic}
    \end{minipage}
    \begin{minipage}{0.31\textwidth}
        \begin{overpic}[width=\textwidth]{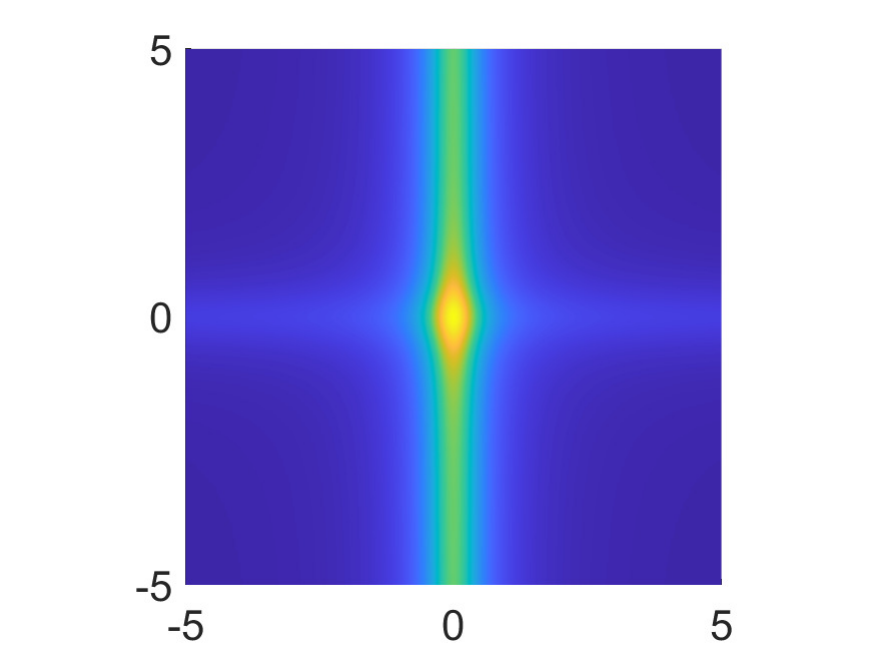}
        \put (44,-1) {$\displaystyle x$}
        \put (1,44) {$\displaystyle y$}
        \put (30,73) {$\displaystyle [\hat K(t)g](x,y)$}
        \end{overpic}
    \end{minipage}
    \begin{minipage}{0.31\textwidth}
        \begin{overpic}[width=\textwidth]{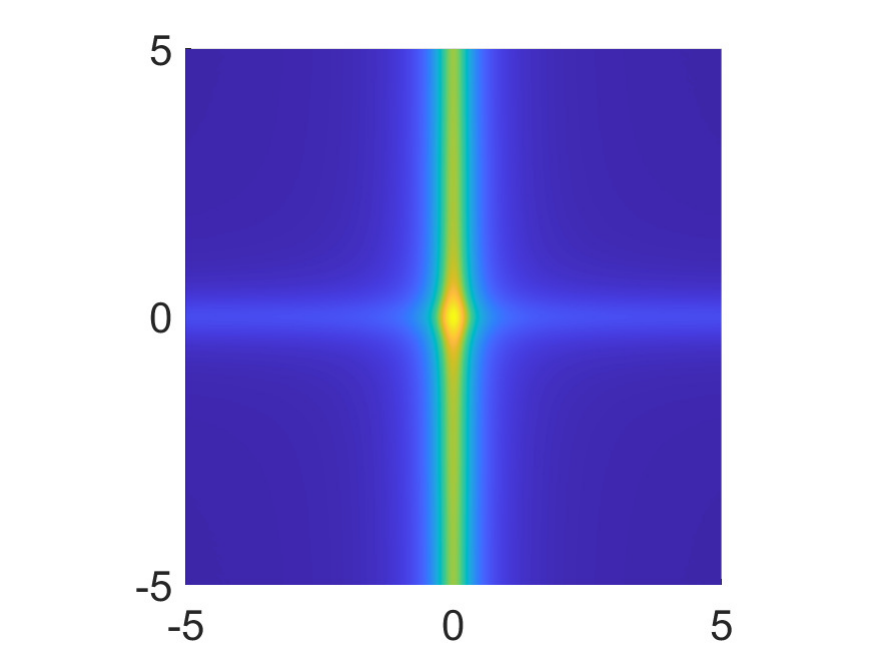}
        \put (44,-1) {$\displaystyle x$}
        \put (1,44) {$\displaystyle y$}
        \put (30,73) {$\displaystyle  [\hat K(t)g](x,y)$}
        \end{overpic}
    \end{minipage}
    \caption{\label{fig:example_4} \textbf{Example 4.} The evolution of $g(x)=\exp(-2x^2-0.5y^2)$ under the action of the semigroup in Example 4 is plotted at $t=0$ (left), $t=0.1$ (middle), and $t=0.2$ (right). Note the concentration along the axes caused by the roots of the variable coefficients in the first-order generator.}
\end{figure}

\section*{Acknowledgments}
The authors would like to thank Alex Townsend and Matthew Colbrook for helpful comments on an earlier version of this manuscript. They would also like to acknowledge the support of AFOSR Laboratory Task programs \#21RQCOR083 and \#24RQCOR001. These tasks are jointly funded by Drs.~Fariba Fahroo, Frederick Leve, and Warren Adams. Finally, they thank Dr.~Andrew Leonard, whose early investigation into the potential of contour integral methods for Koopman semigroups under AFOSR Laboratory Task program \#21RQCOR083 led to this work.

\begin{appendices}
\crefalias{section}{appendix}

\section{Quadrature errors in a Banach space}\label{app:quad_banach}

Here, we establish a straightforward extension of the discretization error bound for the trapezoidal rule~\cite[Thm.~5.1]{trefethen2014exponentially} to the setting of Banach-valued functions $f:\mathbb{C}\rightarrow\mathcal{X}$. 
\begin{theorem}\label{appthm:trap_rule_error}
    Suppose $f:\mathbb{C}\rightarrow\mathcal{X}$ is analytic in the strip $|{\rm Im}\,z|<a$ for some $a>0$, that $f(z)\rightarrow 0$ uniformly as $|z|\rightarrow\infty$ in the strip, and for some $C_f>0$, it satisfies
    \begin{equation}\label{appeqn:uniform_int_bound}
        \int_{-\infty}^\infty \|f(s+ib)\|\,ds\leq C_f, \qquad\text{for all}\qquad -a < b < a.
    \end{equation}
    If, for some $h>0$, the sum $h\sum_{k=-\infty}^\infty f(hk)$ converges absolutely, then it satisfies
    \begin{equation}\label{appeqn:trap_rule_bound}
        \left\|\int_{\infty}^\infty f(s)\,ds - h\sum_{k=-\infty}^\infty f(hk)\right\|\leq \frac{2C_f}{e^{2\pi a/h}-1}.
    \end{equation}
    Moreover, the constant $2C_f$ in the numerator is sharp, i.e., as small as possible.
\end{theorem}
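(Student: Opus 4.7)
The plan is to reduce the Banach-valued bound to the well-known scalar trapezoidal-rule error bound via duality. Fix any continuous linear functional $\phi\in\mathcal{X}^*$ with $\|\phi\|=1$ and consider the scalar function $g_\phi(z)=\phi(f(z))$. Because $\phi$ is linear and continuous, $g_\phi$ inherits from $f$ analyticity in the strip $|{\rm Im}\,z|<a$ and uniform decay as $|z|\to\infty$ within the strip. Furthermore, the pointwise estimate $|g_\phi(s+ib)|\leq \|\phi\|\,\|f(s+ib)\| = \|f(s+ih)\|$ shows that $g_\phi$ satisfies the uniform integrability hypothesis~\cref{appeqn:uniform_int_bound} with the same constant $C_f$, and absolute convergence of $h\sum f(hk)$ in $\mathcal{X}$ implies absolute convergence of $h\sum g_\phi(hk)$ in $\mathbb{C}$.

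With this setup in place, I would apply the standard scalar trapezoidal-rule error bound~\cite[Thm.~5.1]{trefethen2014exponentially} to $g_\phi$ to obtain
\begin{equation*}
\left|\int_{-\infty}^\infty g_\phi(s)\,ds - h\sum_{k=-\infty}^\infty g_\phi(hk)\right|\leq \frac{2C_f}{e^{2\pi a/h}-1}.
\end{equation*}
Since $\phi$ is continuous and linear, it commutes with Bochner integration and with absolutely convergent series in $\mathcal{X}$, so the left-hand side equals $|\phi(E)|$, where $E$ denotes the Banach-valued error appearing on the left of~\cref{appeqn:trap_rule_bound}. Taking the supremum over $\phi$ in the unit ball of $\mathcal{X}^*$ and invoking the Hahn--Banach duality formula $\|E\| = \sup_{\|\phi\|=1}|\phi(E)|$ then yields the claimed bound.

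For the sharpness assertion I would recall that the scalar result provides a one-parameter family of analytic functions (essentially meromorphic translates approaching the strip boundary) that saturate the constant $2C_f$ in the scalar bound, and then lift such an extremizer $f_{\rm scalar}$ to the Banach setting by fixing any unit vector $x_0\in\mathcal{X}$ and setting $f(z)=f_{\rm scalar}(z)\,x_0$. This $f$ realizes the same $C_f$ and saturates~\cref{appeqn:trap_rule_bound} in norm, showing $2C_f$ cannot be replaced by a smaller universal constant.

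There is no substantive obstacle in this argument; it is essentially a translation of the scalar theorem into the Banach setting via Hahn--Banach. The only points that deserve attention are (i) the interchange of $\phi$ with the integral, which is immediate for Bochner-integrable functions, and (ii) the interchange with the infinite sum, which follows from absolute convergence in $\mathcal{X}$ together with $\|\phi\|=1$. Both are routine and could be written up compactly after stating the scalar theorem as a black box.
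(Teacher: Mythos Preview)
Your approach is essentially identical to the paper's: both reduce to the scalar trapezoidal-rule theorem by applying an arbitrary norm-one functional, noting that the scalar function inherits analyticity and the same integrability constant $C_f$, invoking the scalar bound, and then taking the supremum over the dual unit ball via Hahn--Banach. You additionally spell out the sharpness argument (lifting a scalar extremizer by a fixed unit vector), which the paper's proof leaves implicit; there is one harmless typo ($\|f(s+ih)\|$ should read $\|f(s+ib)\|$).
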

\begin{proof}
    The assumption of analyticity and uniform integrability of $f$ in the strip $-a<b<a$, together with separability of $\mathcal{X}$, means that $f$ is Bochner integrable. Consequently, it is permissible to exchange linear functionals (and, generally, operators) on $\mathcal{X}$ with the integral sign. Taking an arbitrary linear functional $x'\in\mathcal{X}'$ with dual norm $\|x'\|_{\mathcal{X}'}=1$ and, denoting the duality pairing by $\langle\cdot,\cdot\rangle$, we compute
    $$
    \left\langle x', \int_{\infty}^\infty f(s)\,ds - h\sum_{k=-\infty}^\infty f(hk)\right\rangle = \int_{\infty}^\infty \left\langle x',f(s)\right\rangle  \,ds - h\sum_{k=-\infty}^\infty \left\langle x', f(hk)\right\rangle.
    $$
    Note that the infinite series is absolutely and uniformly convergent, permitting the interchange for the sum as well the integral. The scalar function $\langle x',f(s)\rangle$ inherits the analyticity and integrability of $f$ in the strip with the same constant of integrability $C_f$. Applying the scalar trapezoidal error bound for the discretization error on the right side and taking supremums over the unit ball, $x'\in \mathcal{X}'$ with $\|x'\|_{\mathcal{X}'}\leq 1$, on both sides of the equality above provides the desired bound in~\cref{appeqn:trap_rule_bound}.
\end{proof}

\section{Hypergeometric functions}\label{app:hypogeo_func}

To develop rigorous and explicit bounds for the error in our high-order quadrature approximations, we make extensive use of the hypergeometric identity
\begin{equation}\label{appeqn:hypo_geo_id}
\int \frac{ds}{\left((\delta-b)^2+s^2\right)^{m/2}} = \frac{s}{(\delta-b)^m}{}_2F_1\left(\frac{1}{2},\frac{m}{2};\frac{3}{2};-\frac{s^2}{(\delta-b)^2}\right).
\end{equation}
This appendix summarizes a few useful asymptotic properties and identities that allow us to compute the definite integrals that appear in the high-order quadrature bounds.

We first calculate the asymptotics of ${}_2F_1(a,b;c;z)$ as $z\rightarrow-\infty$, following the suggestion in the Digital Library of Mathematical Functions~\cite[\href{https://dlmf.nist.gov/15.12.i}{15.12(i)}]{NIST:DLMF}. We start with the series representation of the normalized hypergeometric function~\cite[\href{https://dlmf.nist.gov/15.2.E2}{15.2.2}]{NIST:DLMF}
\begin{equation}\label{appeqn:gamma_series}
    \tilde F(a,b;c;z) = \Gamma(c)^{-1}{}_2F_1(a,b;c;z) = \sum_{k=0}^\infty \frac{(a)_k(b)_k}{\Gamma(c+k) k!}z^k, \qquad\text{for}\qquad |z|<1.
\end{equation}
Here, $(\cdot)_k$ is Pochhammer's symbol. Applying the identity in~\cite[\href{https://dlmf.nist.gov/15.8.E2}{15.8.2}]{NIST:DLMF}, we have that
\begin{align}\label{appeqn:hypogeo_flip}
    \frac{\sin(\pi(b-a))}{\pi}\tilde F(a,b;c;z) = &\frac{(-z)^a}{\Gamma(b)\Gamma(c-a)}\tilde F\left(a,a-c+1;a-b+1;\frac{1}{z}\right) \\ &- \frac{(-z)^b}{\Gamma(a)\Gamma(c-b)}\tilde F\left(b,b-c+1;b-a+1;\frac{1}{z}\right).
\end{align}
Substituting the series representation from~\cref{appeqn:gamma_series} on the right-hand side and taking the limit $z\rightarrow-\infty$, we find that the leading order expansion (for $0<a<b$) is
\begin{equation}\label{appeqn:hypogeo_limit}
    \tilde F(a,b;c;z) \sim \left[\frac{\pi}{\sin(\pi(b-a))\Gamma(b)\Gamma(c-a)\Gamma(a-b+1)}\right]\frac{1}{(-z)^a}.
\end{equation}
Multiplying both  sides by $(-z)^a\Gamma(c)$ and substituting the relevant values of $a=1/2$, $b=m/2$, $c=3/2$, and $z=-s^2/(\delta-b)^2$, we determine that
$$
\lim_{s\rightarrow\infty} \left(\frac{s}{\delta-b}\right){}_2F_1\left(\frac{1}{2},\frac{m}{2};\frac{3}{2};-\frac{s^2}{(\delta-b)^2}\right) = \frac{\pi}{\sin(\pi(m-1)/2)\Gamma(m/2)}\frac{\Gamma(3/2)}{\Gamma((3-m)/2)}.
$$
Finally, we simplify the right-hand side by applying Euler's reflection formula,
$$
\frac{\pi\Gamma(3/2)}{\sin(\pi(m-1)/2)\Gamma(m/2)\Gamma((3-m)/2)} = \frac{\Gamma(3/2)\Gamma((m-1)/2)}{\Gamma(m/2)}.
$$
The last two results combine with~\cref{appeqn:hypo_geo_id} to establish the definite integral calculated in the proof of the discretization error in~\cref{sec:disc_error}.  

Next, we consider the rate at which $z{}_2F_1(a,b;c;z)$ approaches its limit as $z\rightarrow-\infty$ for fixed values of $a=1/2$, $b=m/2$, and $c=3/2$. At these values, the series expansion in~\cref{appeqn:gamma_series} of the first term on the right-hand side of~\cref{appeqn:hypogeo_flip} contains only the leading order term because $a-c+1=\frac{1}{2}-\frac{3}{2}+1 = 0$ and $(0)_k=0$ for all $k\geq 1$ (while $(0)_0=1$). The higher order terms are contributed by the series expansion of the second term in~\cref{appeqn:hypogeo_flip} and we find that, as $z\rightarrow -\infty$, the rate of approach to the limit is
$$
\frac{\Gamma(3/2)\Gamma((m-1)/2)}{\Gamma(m/2)} - (-z)^{1/2}{}_2F_1\left(\frac{1}{2},\frac{m}{2};\frac{3}{2};z\right) = \frac{1}{m-1}\left(z\right)^{(m-1)/2} + \mathcal{O}\left((z)^{(m+1)/2}\right).
$$
Plugging in $z = -(hN)^2/(\delta-b)^2$ establishes the leading order asymptotic in~\cref{eqn:asymptotic_truncation_bound}.

Finally, we recall that the hypergeometric function can be expressed in terms of an $\arctan$ function when $m=2$. We have that
$$
\sqrt{z}{}_2F_1\left(\frac{1}{2},1;\frac{3}{2},z\right) = \arctan(\sqrt{z}).
$$
This allows us to compare the bounds on the truncation error in~\cref{thm:trap_rule_1_error} and~\cref{thm:trap_rule_2_error} when $m=2$ and establishes their equivalence when $a=\delta$ in~\cref{sec:example_analysis}.

\end{appendices}


\bibliography{sn-bibliography}

\end{document}